\theoremstyle{plain}
\newtheorem{thm}{Theorem}[section]
\newtheorem{prop}[thm]{Proposition}
\newtheorem{lem}[thm]{Lemma}
\theoremstyle{definition}
\newtheorem{defi}[thm]{Definition}
\newtheorem{ex}[thm]{Example}
\theoremstyle{remark}
\newtheorem{remark}[thm]{Remark}
\newcommand{\bem}{\begin{bmatrix}}
\newcommand{\enm}{\end{bmatrix}}
\newcommand{\la}{\langle}
\newcommand{\ra}{\rangle}
\newcommand{\Ha}{\mathcal{H}}
\newcommand{\z}{\boldsymbol{z}}
\newcommand{\x}{\xi}
\newcommand{\dt}[1]{\frac{\partial #1}{\partial t}}
\newcommand{\dz}[1]{\frac{\partial #1}{\partial \x}}
\newcommand{\C}{\mathbb C}
\newcommand{\R}{\mathbb R}
\newcommand{\N}{\mathbb N}
\newcommand{\X}{\mathcal{X}}
\newcommand{\Y}{\mathcal{Y}}
\newcommand{\Z}{\mathcal{Z}}
\newcommand{\K}{\mathbb K}
\renewcommand{\L}{\mathcal{L}}
\newcommand{\dom}{\mathcal{D}}
\newcommand{\V}{\mathcal{V}}
\newcommand{\W}{\mathcal{W}}
\newcommand{\setdef}[2]{\left\{ #1 \left\vert\vphantom{#1} #2 \right.\right\}}
\newcommand{\e}{\mathrm{e}}
\newcommand{\dx}[1][x]{\,\mathrm{d}#1}
\newcommand{\Real}{{\rm Re}\,}
\renewcommand{\ker}{{\rm ker}\,}
\newcommand{\inv}{^{-1}}
\newcommand{\pres}{p_{\rm res \vphantom{d}}} 
\title{Index concepts for linear differential-algebraic equations in finite and infinite dimensions}
\author{Mehmet Erbay, Birgit Jacob, Kirsten Morris, Timo Reis and Caren Tischendorf}
\begin{document}

\maketitle

\begin{abstract}
Different index concepts for linear differential-algebraic equations are defined in the general Banach space setting,  and compared. For regular finite-dimensional linear   differen\-tial-algebraic equations, all these indices  exist and are equivalent. For infinite-dimensional systems, the situation is more complex. It is proven that although some indices imply others, in general they are not equivalent. The situation is illustrated with a number of examples.
\end{abstract}

\section{Introduction}

In this article we  take a closer look at the index terms for infinite-dimensional differential-algebraic systems (DAE) of the form
    \begin{equation}\label{eqn:dae}
        \frac{{\rm d}}{{\rm d}t}Ex(t)=Ax(t)+f(t), \quad t\geq 0,
    \end{equation}
where $E\colon \X\to\Z$ is a bounded linear operator (denoted by $E\in \L(\X,\Z)$), $(A,\dom(A))$ is a closed and densely defined linear operator from $\X$ to $\Z$ and $f\colon[0,\infty) \to\Z$. Throughout this article, $\X$ and $\Z$ are  Banach spaces and the DAE \eqref{eqn:dae} is assumed to be regular. That is,
    \begin{equation*}
        \rho(E,A)\coloneqq \left\{\lambda \in \C \,\middle|\, (sE-A)\inv \in \L(\Z,\X)\right\}\not=\emptyset.
    \end{equation*}
 By a \textit{solution} of \eqref{eqn:dae} we mean a \textit{classical solution}, that is, a function $x\colon [0,\infty)\rightarrow \dom(A)$ such that $Ex(\cdot)$ is continuously differentiable as a~function with values in $\Z$, and \eqref{eqn:dae} is satisfied for every $t\ge 0$.

The index of a DAE can be defined in a number of various ways. Examples include the differentiation index, the nilpotency index, the resolvent index and the radiality index \cite{GernandHallerReis, Kunkelmehrmann, fedorov-sviridyuk, STrostorff}. Not all indices are defined in the infinite-dimensional case. For instance, the nilpotency index of a DAE demands a Weierstraß form (defined formally below), which is not always available. 

Our aim in writing this paper is to 'collect' all the index terms that are applicable in the infinite-dimensional case, and to characterize and compare them to each other. In particular, we investigate the resolvent index $\pres^{(E,A)}$, the chain index $p_{\rm chain}^{(E,A)}$, the radiality index $p_{\rm rad}^{(E,A)}$, the nilpotency index $p_{\rm nilp}^{(E,A)}$, the differentiation index $p_{\rm diff}^{(E,A)}$ and the perturbation index $p_{\rm pert}^{(E,A)}$. Several of these indices have not previously been defined for infinite-dimensional systems.

One of our main results is  that if  all the  indices mentioned in the previous paragraph exist then
    \begin{align*}
        p_{\rm rad}^{(E,A)}+1 \geq \pres^{(E,A)} \geq p_{\rm nilp}^{(E,A)} = p_{\rm diff}^{(E,A)} =p_{\rm chain}^{(E,A)}.
    \end{align*}
    If in addition the operator $A_1$ in the Weierstraß form generates a $C_0$-semigroup, then
  \begin{align*}
        p_{\rm rad}^{(E,A)}+1 \geq \pres^{(E,A)} \geq p_{\rm nilp}^{(E,A)} = p_{\rm diff}^{(E,A)} =p_{\rm chain}^{(E,A)}=p_{\rm pert}^{(E,A)}.
    \end{align*}
Furthermore, Proposition \ref{nilp>=rad+1}  implies that in the finite-dimensional case, equality holds in all these bounds.

We conclude the introduction with some notation.
 For a $\lambda \in \rho(E,A)$ we call $(\lambda E-A)\inv$, $R^E(\lambda,A)\coloneqq (\lambda E-A)\inv E$ and $L^E(\lambda,A)\coloneqq E(\lambda E-A)\inv$ the \textit{resolvent}, \textit{right-$E$ resolvent} and \textit{left-$E$ resolvent} of $A$ respectively.

\section{Weierstra\ss{} form}
        Consider differential-algebraic systems
        of the form \eqref{eqn:dae}. 
        Let  also $\tilde\X, \tilde\Z$ be  Banach spaces, $\tilde E\in\L(\tilde\X,\tilde \Z)$ and $\tilde A\colon \dom(\tilde A)\subseteq \tilde \X\to\tilde\Z$ closed and densely defined.

    \begin{defi}\label{def:equiv}
        Two differential-algebraic systems $\frac{{\rm d}}{{\rm d}t}Ex=Ax$ and $\frac{{\rm d}}{{\rm d}t}\tilde Ex=\tilde Ax$  are \textit{equivalent}, denoted by $(E, A) \sim (\tilde E, \tilde A)$, if there are two bounded isomorphisms $P\colon\X\to\tilde\X$, $Q\colon\Z\to\tilde\Z$, such that $E=Q\inv \tilde E P$ and $A=Q\inv \tilde A P$.
    \end{defi}

    \begin{defi}
        A bounded operator $N\in\L(\X)$ is called {\em nilpotent}, if there exists a $p\in \N$, such that $N^l\neq 0$ for all $l<p$ and $N^p=0$. $p$ is called the {\em degree of nilpotency}.
    \end{defi}

This definition may be slightly different  in other references. For example, in \cite{fedorov-sviridyuk} the degree of nilpotency is $p-1$ and not $p$.

    \begin{defi}
        The DAE \eqref{eqn:dae} has a \textit{Weierstraß form}, if there exists a Hilbert space $\Y=\Y^1\oplus\Y^2$, such that
        \begin{equation}\label{eqn:form-inf}
            (E,A)\sim \left(\begin{bmatrix}I_{\Y^1} & 0\\0 & N\end{bmatrix},\begin{bmatrix}A_1 & 0\\0 & I_{\Y^2}\end{bmatrix}\right),
        \end{equation}
        where $N\colon\Y^2\to\Y^2$ is a bounded linear nilpotent operator, $A_1 \colon D(A_1) \subseteq\Y^1\to\Y^1$ is a linear operator  and $I_{\Y^i}$ indicates the identity operator on the associated subspace $\Y^i$, $i=1,2$.
    \end{defi}

This form is also known variously as the \textit{quasi-Weierstraß form} \cite{BergerIlchmannTrenn2012} or \textit{Weierstraß canonical form} \cite{Kunkelmehrmann}. In finite dimensions the operator $A_1$ is generally a  Jordan matrix $J$. In this case, the Weierstraß form is unique up to isomorphisms and therefore the nilpotency degree of  $N$ is uniquely determined. To be more precise, assume that $(E,A)$ has two different Weierstraß forms $\left(\big[\begin{smallmatrix}I & 0\\ 0 & N_i\end{smallmatrix}\big], \big[\begin{smallmatrix}J_i & 0\\0&I\end{smallmatrix}\big]\right)$, $i=1,2$. Then the sizes of the Jordan blocks $J_1, J_2$ and of the nilpotent operators $N_1, N_2$  coincide, as well as the degree of nilpotency of these nilpotent operators \cite[Lem.~2.10]{Kunkelmehrmann}.

In the next few sections we will go through a variety of different index term and generalise/adapt them for the infinite-dimensional case. Most of the terms  are already known in the finite-dimensional case.

\section{Resolvent index}
The \textit{resolvent index} has already been defined in \cite[p.~5]{GernandHallerReis}, \cite[p.~8]{STrostorff} and \cite[ch.~6.1]{gernandt_pseudo-resolvent_2023}. It has the advantage that it does not require a Weierstraß form. Thus, this definition can be easily extended to the infinite-dimensional case. The only difficulty encountered in calculating this index is the calculation of the resolvent and its growth rate, which is  a greater hurdle in the infinite-dimensional case.

    \begin{defi}[\textbf{resolvent index}]\hfill\\
        The \textit{resolvent index} of $(E,A)$ is the smallest integer  $\pres^{(E,A)}\in \N_0$, such that there exists a $\omega \in \R$, $C>0$ with $(\omega, \infty)\subseteq \rho(E,A)$ and
        \begin{equation}\label{def:resolvent-index}
            \left\Vert(\lambda E-A)\inv\right\Vert\leq C \left|\lambda\right|^{\pres^{(E,A)}-1}
        \end{equation}
        for all $\lambda\in (\omega,\infty)$. The resolvent index is called a \textit{complex resolvent index}, denoted by $p_{\rm c, res}^{(E,A)}\in \N_0$, if $\C_{\Real >\omega}\subseteq \rho(E,A)$ and \eqref{def:resolvent-index} holds for $p_{\rm c, res}^{(E,A)}$.
    \end{defi}

Note that the resolvent index can also defined in a weaker form as seen in \cite[ch.~5\& 6]{gernandt_pseudo-resolvent_2023}. Clearly, $\pres^{(E,A)}\le p_{\rm c,res}^{(E,A)}$. The next proposition shows that this index is uniquely defined.

    \begin{prop}\label{res-equiv}
        The resolvent index, given that it exists,  is uniquely defined. To be more precise, let $(E,A) \sim (\tilde E, \tilde A)$. Then $p_{\mathrm{res}}^{(E,A)}=p_{\mathrm{res}}^{(\tilde E,\tilde A)}$ and $p_{\mathrm{c,res}}^{(E,A)}=p_{\mathrm{c,res}}^{(\tilde E,\tilde A)}$.
    \end{prop}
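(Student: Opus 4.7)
The plan is to argue directly from the definition of equivalence, showing that the resolvents of $(E,A)$ and $(\tilde E, \tilde A)$ differ only by composition with the bounded isomorphisms $P$ and $Q$, so that the norm estimate defining the resolvent index transfers between the two systems with only a change of multiplicative constant.

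First I would use the relations $E = Q^{-1}\tilde E P$ and $A = Q^{-1}\tilde A P$ from Definition \ref{def:equiv} to write $\lambda E - A = Q^{-1}(\lambda \tilde E - \tilde A) P$ for every $\lambda \in \C$. Since $P$ and $Q$ are bounded isomorphisms, this immediately gives $\rho(E,A) = \rho(\tilde E,\tilde A)$, and whenever $\lambda$ lies in this common resolvent set,
\begin{equation*}
    (\lambda E - A)^{-1} = P^{-1} (\lambda \tilde E - \tilde A)^{-1} Q.
\end{equation*}
Taking operator norms yields the two-sided estimate
\begin{equation*}
    \|P\|^{-1} \|Q^{-1}\|^{-1} \, \|(\lambda \tilde E - \tilde A)^{-1}\| \leq \|(\lambda E - A)^{-1}\| \leq \|P^{-1}\|\,\|Q\|\, \|(\lambda \tilde E - \tilde A)^{-1}\|.
\end{equation*}

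Next I would feed this into the defining bound \eqref{def:resolvent-index}. Suppose $p_{\mathrm{res}}^{(\tilde E,\tilde A)} = p$, so that there exist $\tilde\omega\in\R$ and $\tilde C>0$ with $(\tilde\omega,\infty)\subseteq \rho(\tilde E,\tilde A)$ and $\|(\lambda\tilde E - \tilde A)^{-1}\| \leq \tilde C|\lambda|^{p-1}$ for $\lambda > \tilde\omega$. Combining with the estimate above produces $\|(\lambda E - A)^{-1}\| \leq (\|P^{-1}\|\,\|Q\|\,\tilde C)\,|\lambda|^{p-1}$ on the same half-line, which shows that the resolvent index of $(E,A)$ exists and satisfies $p_{\mathrm{res}}^{(E,A)} \leq p$. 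Exchanging the roles of the two pairs (using the inverse isomorphisms $P^{-1}$ and $Q^{-1}$) gives the reverse inequality, hence equality.

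Finally, the same argument applies verbatim to the complex version: the identity for the resolvent only uses $\lambda\in\rho(E,A)=\rho(\tilde E,\tilde A)$, so if the bound \eqref{def:resolvent-index} holds on a right half-plane $\C_{\Real>\tilde\omega}$ for one system it holds on the same half-plane for the other, with the same exponent $p-1$. Thus $p_{\mathrm{c,res}}^{(E,A)} = p_{\mathrm{c,res}}^{(\tilde E,\tilde A)}$ as well. There is no genuine obstacle here: the proof is essentially a one-line manipulation together with the observation that the constants $\|P^{-1}\|\,\|Q\|$ and its inverse counterpart do not affect the polynomial exponent, only the constant $C$, so the infimum over admissible exponents is identical.
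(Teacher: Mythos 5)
Your proposal is correct and follows essentially the same route as the paper: transport the resolvent through the isomorphisms $P$ and $Q$, absorb their norms into the constant, and conclude that the minimal exponent is unchanged in both directions. (If anything, your identity $(\lambda E-A)^{-1}=P^{-1}(\lambda\tilde E-\tilde A)^{-1}Q$ is stated more cleanly than the paper's displayed relation, which carries extraneous factors of $E$ and $\tilde E$ before passing to the same norm estimate.)
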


    \begin{proof}
        Since $(E,A)\sim (\tilde E, \tilde A )$ there exists two isomorphisms $P\colon \X \to \tilde \X$, $Q\colon \Z \to \tilde \Z$ with
        \begin{equation*}
            E = Q\inv \tilde E P \quad \text{and} \quad A = Q\inv \tilde A P.
        \end{equation*}
        Assume  that $(E,A)$ has resolvent index $\pres^{(E,A)}$. That is,  there exists a $C>0$, $\omega \in\R$, such that $(\omega,\infty)\subseteq \rho(E,A)$ and
        \begin{equation*}
            \Vert (\lambda E-A)\inv \Vert \leq C \left\vert\lambda\right\vert^{\pres^{(E,A)} -1}, \quad \lambda >\omega.
        \end{equation*}
        Since
        \begin{equation*}
            (\lambda \tilde E-\tilde A)\inv \tilde E= P (\lambda E-A)\inv E Q\inv, \quad \lambda >\omega
        \end{equation*}
        it follows that  $(\omega,\infty)\subseteq \rho(E,A)\subseteq\rho(\tilde E,\tilde A)$ and
        \begin{equation*}
            \Vert (\lambda \tilde E-\tilde A)\inv \Vert \leq \left\Vert P \right\Vert \left\Vert (\lambda E-A)\inv \right\Vert \left\Vert Q\inv \right\Vert \leq  \tilde C \left\vert\lambda\right\vert^{\pres^{(E,A)} -1}, \quad \lambda >\omega,
        \end{equation*}
        for $\tilde C \coloneqq C\left\Vert P \right\Vert \left\Vert Q\inv \right\Vert$. Thus, the resolvent index of $(\tilde E, \tilde A)$ is at most $\pres^{(E,A)}$. The other estimate follows from an  equivalent argument and switching $(E,A)$ and $(\tilde E, \tilde A)$. The statement concerning the complex resolvent index follows similarly.
    \end{proof}

    Next, we will show the existence of the (complex) resolvent index for a special class of systems, namely $\X=\Z$, $E$ is non-negative and $A$ is dissipative. Note that we call $E$ non-negative, denoted by $E\geq 0$, if $\langle Ex,x\rangle\geq 0$ for all $x\in\X$ and we call $A$ dissipative, if $\Real \langle Ax,x\rangle\leq 0$ for all $x\in\dom(A)$. Such systems, are known as \textit{port-Hamiltonian DAEs} or \textit{abstract dissipative DAEs} (see \cite[ch.~ 7]{gernandt_pseudo-resolvent_2023} and \cite{mehrmann2023abstract}).

    \begin{thm}\label{res-index-bound}
        Let $\X=\Z$, $E\in \L(\X)$ be non-negative self-adjoint and $A\colon\dom(A)\subseteq\X\to\X$ be dissipative. If there exists a $\omega>0$, such that $(\omega,\infty) \subseteq \rho(E,A)$,  then $p_{\mathrm{res}}^{(E,A)}\le 2$. If also  $\C_{\Real >\omega}\subseteq \rho(E,A)$, then $p_{\mathrm{c,res}}^{(E,A)}\le 3$.
    \end{thm}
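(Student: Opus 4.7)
The plan is to first control the right-$E$ resolvent $R^E(\lambda,A)=(\lambda E-A)\inv E$ using the Hilbert-space structure, and then transfer the bound into an estimate on $(\lambda E-A)\inv$ via the pencil resolvent identity
\begin{equation*}
(\lambda E-A)\inv \;=\; (\mu E-A)\inv + (\mu-\lambda)\,R^E(\lambda,A)\,(\mu E-A)\inv,
\end{equation*}
where $\mu\in(\omega,\infty)\subseteq\rho(E,A)$ is fixed once and for all and $K\coloneqq\|(\mu E-A)\inv\|$.

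For the first step I would fix $y\in\X$, set $z=R^E(\lambda,A)y\in\dom(A)$ so that $(\lambda E-A)z=Ey$, and pair with $z$:
\begin{equation*}
\lambda\,\la Ez,z\ra - \la Az,z\ra \;=\; \la Ey,z\ra \;=\; \la E^{1/2}y,\,E^{1/2}z\ra.
\end{equation*}
Taking real parts and using $\Real\lambda\ge\omega>0$, $E\ge 0$ and dissipativity of $A$, both terms on the left are non-negative, so Cauchy--Schwarz together with $\|E^{1/2}\|^2=\|E\|$ yields
\begin{equation*}
(\Real\lambda)\,\|E^{1/2}z\|^2 \;\le\; \|E\|^{1/2}\,\|y\|\,\|E^{1/2}z\|,
\end{equation*}
and hence $\|Ez\|\le\|E^{1/2}\|\,\|E^{1/2}z\|\le\|E\|\|y\|/\Real\lambda$.

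Because dissipativity only controls the ``observable'' part $E^{1/2}z$, I would recover $\|z\|$ itself using the auxiliary regular point $\mu$: from $(\mu E-A)z=(\mu-\lambda)Ez+Ey$ one obtains
\begin{equation*}
\|z\| \;\le\; K\bigl(|\mu-\lambda|\,\|Ez\|+\|E\|\,\|y\|\bigr) \;\le\; K\|E\|\Bigl(\tfrac{|\mu-\lambda|}{\Real\lambda}+1\Bigr)\|y\|.
\end{equation*}
For $\lambda\in(\omega,\infty)$ the bracketed factor is uniformly bounded in $\lambda$, giving $\|R^E(\lambda,A)\|\le C$; for $\lambda\in\C_{\Real>\omega}$ the same factor is $O(|\lambda|)$, giving $\|R^E(\lambda,A)\|\le C|\lambda|$. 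Plugging these into the resolvent identity yields $\|(\lambda E-A)\inv\|\le K+K|\mu-\lambda|\,\|R^E(\lambda,A)\|$, which is $O(|\lambda|)$ on $(\omega,\infty)$ and $O(|\lambda|^2)$ on $\C_{\Real>\omega}$. Matching against the definition $\|(\lambda E-A)\inv\|\le C|\lambda|^{p-1}$ produces $\pres^{(E,A)}\le 2$ and $p_{\mathrm{c,res}}^{(E,A)}\le 3$.

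The main obstacle is the passage from a bound on $\|E^{1/2}z\|$ (which is all that dissipativity yields directly) to one on $\|z\|$: since $E$ may be highly degenerate, an extra algebraic step using the DAE at the auxiliary point $\mu$ is needed to absorb the kernel of $E$. Everything else reduces to Cauchy--Schwarz and the standard pencil resolvent identity, and the factor $|\lambda|$ lost when moving from the real to the complex case corresponds precisely to losing a factor of $\Real\lambda$ versus $|\lambda|$ in the Cauchy--Schwarz estimate.
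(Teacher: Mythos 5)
Your proof is correct, but it takes a genuinely different route from the paper. The paper works scalar-valued: it observes that for each $x$ the function $\lambda\mapsto\langle(\lambda E-A)\inv x,x\rangle+\|x\|^2$ is positive real on the right half-plane, invokes Goldberg's growth theorem for positive real functions to get $|f_x(\lambda)|\lesssim|\lambda|^2/\Real\lambda$, and then passes to the operator norm via the numerical radius. You instead prove an operator-level energy estimate: pairing $(\lambda E-A)z=Ey$ with $z$ and discarding the non-negative term $-\Real\langle Az,z\rangle$ gives $\|E^{1/2}z\|\le\|E\|^{1/2}\|y\|/\Real\lambda$, and the degeneracy of $E$ is then absorbed by rewriting $z=(\mu E-A)\inv\bigl((\mu-\lambda)Ez+Ey\bigr)$ at a fixed regular point $\mu$; the pencil resolvent identity converts the resulting bound on $R^E(\lambda,A)$ into the bound on $(\lambda E-A)\inv$. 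Your argument is more elementary (no appeal to the theory of positive real functions) and yields the stronger intermediate statement that $\|(\lambda E-A)\inv E\|$ is uniformly bounded on $(\omega,\infty)$ and $O(|\lambda|)$ on $\C_{\Real>\omega}$, which is of independent interest for the radiality discussion later in the paper; the paper's route, by contrast, isolates exactly where the passivity structure enters (positive realness of the Cayley-type scalar functions) and the loss of the factor $|\lambda|^2/\Real\lambda$ versus $\lambda$ appears in both arguments for the same reason, namely that only $\Real\lambda$ is controlled off the real axis. Both proofs give the same exponents, and each step of yours (the identity $\langle Ey,z\rangle=\langle E^{1/2}y,E^{1/2}z\rangle$, the uniform boundedness of $|\mu-\lambda|/\lambda$ on the real ray, and the resolvent identity) checks out.
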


    \begin{proof}
        For any $x\in\X$ define  $z=(\lambda E-A)\inv x .$ By using the dissipativity of $A$ and the positivity of $E$ we deduce that
        \begin{align*}
            \Real \langle (\lambda E-A)\inv x,x\rangle = \langle z, Ez\rangle \Real \lambda - \Real \langle z, A z\rangle \geq 0
        \end{align*}
        for all $\lambda\in \rho(E,A)\cap\C_{\Real >0}$, . Thus, for every $x\in \X$ the function $$f_x\colon \lambda\mapsto\langle (\lambda E-A)\inv x,x\rangle + \left\Vert x \right\Vert^2$$ is positive real. Let $\mu>\omega$. Using \cite[Thm.~3]{goldberg} implies
        \begin{align*}
            \left\vert f_x(\lambda)\right\vert
            \leq \left\vert f_x(\mu)\right\vert \frac{\left\vert \lambda\right\vert^2 + \left\vert \mu\right\vert^2 + 3 \left\vert \lambda \mu \right\vert}{\mu \Real \lambda}
            \leq \left\vert f_x(\mu)\right\vert \frac{5}{\mu}\frac{\left\vert\lambda\right\vert^2}{\Real \lambda}, \quad \lambda \in \C_{\Real>\mu}.
        \end{align*}
        Hence, together with the Riesz representation theorem we derive
        \begin{align*}
            \left\Vert (\lambda E-A)\inv \right\Vert
            = \sup_{\substack{x,y\in \X \\ \Vert x\Vert=\Vert y\Vert = 1}} \left\vert \langle (\lambda E-A)\inv x,y\rangle \right\vert
            \leq \sup_{\substack{x\in\X \\ \left\Vert x \right\Vert=1}} 2\left\vert f_x(\mu)\right\vert
            \leq K\frac{\left\vert\lambda\right\vert^2}{\Real \lambda},
        \end{align*}
        for all $\lambda \in \C_{\Real >\omega}$ with $K=(\left\Vert (\mu E-A)\inv \right\Vert +1)\frac{10}{\mu}$. Thus, the complex resolvent index is at most $3$ and if $\lambda>\omega$ is real, then $\frac{\left\vert\lambda\right\vert^2}{\Real \lambda}= \lambda$ and the resolvent index is at most $2$.
    \end{proof}

    \begin{ex} {\bf ($\pres^{(E,A)} = 2$ and $p_{\mathrm{c, res}\vphantom{d}}^{(E,A)}=3$.)}
        Define $A={\rm diag\,} (A_0, A_1, A_2,\ldots)$ with
        \begin{align*}
            A_0=\bem 0 & -1\\ 1& 0\enm,\quad A_k=\bem 0 & \sqrt{k^4+1} \\ -\sqrt{k^4+1} & -2\enm, \quad k\in \N,
        \end{align*}
        and $\dom(A)\coloneqq \{x\in \ell^2 \mid Ax\in\ell^2\}$. Then $A$ can be extended to $\ell^2$, which will denoted by $A_{-1}$. Define $E \in\L(\ell^2)$, $B\colon\R\to \dom(A^\ast)'$ and $C\colon \dom(A)\to\R$ with $E={\rm diag\,}(E_0, E_1, E_2,\ldots)$ and $B=(B_0, B_1, B_2, \ldots)^T = C^\ast$, whereby
        \begin{align*}
            & & E_0 &= \bem 1 & 0\\ 0 & 0\enm, & E_k &= \bem 1 & 0 \\ 0 & 1\enm, \quad k\in\N, & & \\
            & & B_0 &= \bem 0\\ 1\enm, & B_k &=\bem 0\\ k^{\frac{5}{4}}\enm, \quad k\in\N. & &
        \end{align*}
        Consider the following system
        \begin{align*}
            \frac{{\rm d}}{{\rm d}t}\underbrace{\bem E & 0 & 0\\ 0 & 0 & 0\\ 0 & 0 & 0\enm}_{\mathcal{E}} \bem x_1\\ x_2\\ x_3\enm = \underbrace{\bem A_{-1} & B & 0\\ -C & 0 & I\\ 0 & -I & 0\enm}_{\mathcal{A}} \bem x_1\\ x_2 \\ x_3 \enm
        \end{align*}
        on $\X=\ell^2\times \R\times \R$. Obviously, $\mathcal{E}$ is non-negative and self-adjoint and, by its construction, $\mathcal{A}$ with maximal domain  is dissipative. Thus, $(\mathcal{E}, \mathcal{A})$ satisfies the conditions of Theorem \ref{res-index-bound}.

        It will now be shown that $\pres^{(\mathcal{E}, \mathcal{A})} = 2$ and $p_{\rm c,res}^{(\mathcal{E}, \mathcal{A})}=3$. For $s\in\rho(\mathcal{E}, \mathcal{A})$
        \begin{equation*}
            (s\mathcal{E}-\mathcal{A})\inv = \bem (sE-A)\inv & 0 & (sE-A)\inv B \\ 0 & 0 & I \\ -C(sE-A)\inv & -I & C(sE-A)\inv B\enm.
        \end{equation*}

        It can be shown that
        \begin{equation*}
            G(s)\coloneqq C(sE-A)\inv B =  s+\sum_{k=1}^\infty \frac{k^{\frac{5}{2}}s}{s^2+2s+k^4+1},
        \end{equation*}
a function that George Weiss (Tel Aviv) scribbled on paper for one of the authors. Thanks for that!\\
        Since $G(s)\geq s$ for $s\geq 0$, the resolvent $(s\mathcal{E}-\mathcal{A})\inv$ grows at least linearly  along the real axis and together with Theorem \ref{res-index-bound}  this implies $\pres^{(\mathcal{E}, \mathcal{A})}=2$.

        Now consider growth in the entire right-hand-plane. Letting $\sigma>0$ and $s_n\coloneqq \sigma + i n^2$ for $n\in\N$,
        \begin{align*}
            \Real G(s_n) &= \sigma + \sum_{k=1}^\infty \frac{k^{\frac{5}{2}}(\sigma ((1+\sigma)^2+k^4)+(2+\sigma)n^4)}{((1+\sigma)^2+(k^4-n^4))^2+4(1+\sigma)^2n^4 }\\
            &\geq \sigma + \frac{n^{\frac{5}{2}}(\sigma ((1+\sigma)^2+n^4)+(2+\sigma)n^4)}{(1+\sigma)^4+4(1+\sigma)^4n^4 }\\
            &\geq \frac{2n^{\frac{5}{2}}n^{4}}{(1+\sigma)^4(1+4n^4)} = \frac{2n^{\frac{5}{2}}}{5(1+\sigma)^4} \frac{5n^4}{1+4n^4}\\
            &\geq \frac{2n^{\frac{5}{2}}}{5(1+\sigma)^4}, \quad n\in\N.
        \end{align*}
        By choosing an $c>0$ and $N\in\N$ such that $\vert s_n\vert^{\frac{5}{4}} \leq c \frac{2n^{\frac{5}{2}}}{5(1+\sigma)^4}$ for every $n\geq N$, one obtains
        \begin{equation*}
            \left\vert G(s_n)\right\vert \geq \Real G(s_n) \geq \frac{1}{c}\left\vert s_n\right\vert^{\frac{5}{4}}, \quad n\geq N.
        \end{equation*}
        Hence, the growth of $G(s_n)$ is more than quadratic  along lines $\Real s = \sigma$ parallel to the imaginary axis for every $\sigma>0$. Therefore, using Theorem \ref{res-index-bound} it follows that  $p_{\mathrm{c,res}}^{(E,A)}=3$.
    \end{ex}

    \begin{ex} {\bf (A class of well-posed systems with $\pres^{(E,A)} \le 2$)} \label{never-ending-example}
        Let $A_o$ with $\dom (A_o) \subset \W$ generate a $C_0$-semigroup on  $\W$, where $\W$ is a Hilbert space. We  indicate the growth bound of the semigroup by $\omega$. For $b\in \W$ and $c\in \dom (A_o^*)$ with $\langle b, c\rangle\not=0$ define the operators $B u = b u$ where $u \in \C$ and $ C z= \la z, c \ra $ for any $z \in \W$.  We define the DAE on $\Z= \W \times \C $ by
        \begin{equation}
            \frac{{\rm d}}{{\rm d}t} \underbrace{\bem I & 0 \\ 0 & 0 \enm}_{\eqqcolon E} x(t)= \underbrace{\bem A_o & B \\C & 0 \enm}_{\eqqcolon A} x(t) \quad t \geq 0.
            \label{eq-zero1}
        \end{equation}
	Defining for $s \in \rho (A_o ), $
        \begin{equation*}
            G(s)= \la (sI-A_o)^{-1} b ,c \ra
        \end{equation*}
        we obtain
        \begin{equation*}
            (sE-A)^{-1} = \bem (sI-A_o)^{-1} - (sI-A_o)^{-1} B G(s)^{-1} C (sI-A_o)^{-1} & (sI-A_o)^{-1} B G(s)^{-1} \\  G(s)^{-1} C (sI-A_o)^{-1} & -G(s)^{-1}\enm .
        \end{equation*}
        Note that the condition $\langle b, c\rangle\not=0$ implies $G(s)\not=0$ for $s \in \rho (A_o )$.
        Thus, the resolvent of $(E,A)$ is non-empty and the system is regular.
        Since
        \begin{equation*}
            \lim_{s \to \infty } s G(s) = \la c,b\ra,
        \end{equation*}
        for large $s$, $G(s)^{-1} \leq M s$ and so the resolvent index is at most $2$.
    \end{ex}

\section{Chain index}

    The concept of Wong sequences \cite{Wong1974} are needed to define the chain index. For $\X=\Z=\K^n$ ($n\in \N$) we define
    \begin{align*}
        & & \V_0 &\coloneqq \K^n,  &\V_{i+1}&\coloneqq A\inv (E(V_i)), & \quad i\in\N, & & \\
        & & \W_0 &\coloneqq \left\{0\right\},&  \W_{i+1}&\coloneqq E\inv(A(W_i)), & \quad i\in\N. & &
    \end{align*}
    Thus, there exists $k,l\in\N$ such that $\V_{i+1}\subsetneq \V_i$, $0\leq i < l$, $\W_j\subsetneq \W_{j+1}$, $0\leq j < k$ and $\V_l=\V_{l+r}$, $\W_k=\W_{l+r}$ for all $r\in\N$ \cite[p.~3]{BergerIlchmannTrenn2012}. Let $x_k\in\W_k$. Then, there exists a $x_{k-1}\in\W_{k-1}$ with $Ex_k=Ax_{k-1}$. Inductively, one obtains
    \begin{align}\label{chain}
        \begin{split}
            Ex_1 &= 0,\\
            Ex_2 &= Ax_1,\\
            & \; \;\vdots \\
            Ex_k &= Ax_{k-1},
        \end{split}
    \end{align}
    $x_r\in \W_{r}$, $0\leq r\leq k$, which is called a \textit{chain of length $k$}. The concept of such a chain is not new and can be found in \cite[p.~15]{BergerIlchmannTrenn2012} for the finite-dimensional case or \cite[p.~18]{fedorov-sviridyuk} for arbitrary Banach spaces. It should be mentioned that the latter reference  does not call \eqref{chain} a chain.

    \begin{defi}[\textbf{chain index}]\hfill\\
        We call $(x_1,\ldots, x_{p})\in\dom(A)^{p-1}\times \X$, $p\in\N$, a \textit{chain of} $(E,A)$ of length $p$, if
        $x_1\in\ker E\setminus\{0\}$, $x_k\notin \ker A$, $k=1,\ldots,p-1$, and
        \begin{equation}
            Ex_{k+1}=Ax_{k},\quad k=1,\ldots,p-1.
        \end{equation}
        The \textit{chain index} $p_{\rm chain}^{(E,A)}\in\N_0$ of $(E,A)$ is the supremum over all chain lengths of $(E,A)$.
    \end{defi}

    With this definition we exclude the case where the chain index can be infinite. Such an example is obtained when $(E,A)$ has a form as in \eqref{eqn:form-inf}, whereby $N$ is quasi-nilpotent. For such systems the chain index is not defined.

    \begin{prop}\label{chain-equiv}
        The chain index is, given that it exists, uniquely defined. To be more specific, let $(E,A) \sim (\tilde E, \tilde A)$. Then $p_{\mathrm{chain}}^{(E,A)}=p_{\mathrm{chain}}^{(\tilde E,\tilde A)}$.
    \end{prop}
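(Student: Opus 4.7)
The plan is to show that the isomorphism $P\colon\X\to\tilde\X$ carries chains of $(E,A)$ to chains of $(\tilde E,\tilde A)$ of the same length, and, symmetrically, $P^{-1}$ carries chains in the other direction. Since both maps preserve chain length, the suprema over chain lengths agree.

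First I would set up the translation. By Definition \ref{def:equiv} we have $E=Q^{-1}\tilde EP$ and $A=Q^{-1}\tilde AP$ with bounded isomorphisms $P,Q$, and the identity $A=Q^{-1}\tilde AP$ implicitly asserts $\dom(A)=P^{-1}(\dom(\tilde A))$. Thus $x\in\dom(A)$ iff $Px\in\dom(\tilde A)$, and for such $x$ we have $\tilde EPx=QEx$ and $\tilde APx=QAx$.

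Next, given a chain $(x_1,\dots,x_p)\in\dom(A)^{p-1}\times\X$ of $(E,A)$, I would verify that $(Px_1,\dots,Px_p)$ is a chain of $(\tilde E,\tilde A)$. The relation $\tilde EPx_{k+1}=QEx_{k+1}=QAx_k=\tilde APx_k$ gives the chain equations. Since $P$ is an isomorphism, $x_1\neq 0$ implies $Px_1\neq 0$, and $Ex_1=0$ gives $\tilde EPx_1=QEx_1=0$, so $Px_1\in\ker\tilde E\setminus\{0\}$. Similarly $x_k\notin\ker A$ for $k\le p-1$ translates via $\tilde APx_k=QAx_k$ and the injectivity of $Q$ to $Px_k\notin\ker\tilde A$. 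Hence $(Px_1,\dots,Px_p)$ is a chain of $(\tilde E,\tilde A)$ of length $p$, so $p_{\rm chain}^{(\tilde E,\tilde A)}\ge p_{\rm chain}^{(E,A)}$.

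The reverse inequality follows by exactly the same argument applied to the equivalence $(\tilde E,\tilde A)\sim(E,A)$ furnished by $P^{-1}$ and $Q^{-1}$, yielding $p_{\rm chain}^{(E,A)}=p_{\rm chain}^{(\tilde E,\tilde A)}$. There is no substantive obstacle here; the only bookkeeping point worth care is the compatibility of domains under $P$, which is immediate from the very definition of equivalence.
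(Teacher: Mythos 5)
Your proposal is correct and follows essentially the same route as the paper: push a chain forward through $P$, use the identities $\tilde EPx=QEx$, $\tilde APx=QAx$ together with the injectivity of $P$ and $Q$ to verify the chain conditions, and conclude by symmetry. The only (harmless) difference is that you argue lengthwise for arbitrary chains rather than picking a chain realizing the supremum, which if anything is slightly cleaner.
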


    \begin{proof}
        Since $(E,A)\sim (\tilde E, \tilde A )$ there exists two isomorphisms $P\colon \X \to \tilde \X$, $Q\colon \Z \to \tilde \Z$, such that
        \begin{equation*}
            E = Q\inv \tilde E P \quad \text{and} \quad A = Q\inv \tilde A P.
        \end{equation*}
        Let $p_{\rm chain}^{(E,A)}\in\N$. Thus, there exists a chain $(x_1,\ldots,x_{p_{\rm chain}^{(E,A)}})$, such that
        \begin{equation*}
            Ex_1 = 0, Ex_2=Ax_1, \ldots, Ex_{p_{\rm chain}^{(E,A)}} = Ax_{p_{\rm chain}^{(E,A)}-1}.
        \end{equation*}
        Since $E=Q\inv \tilde E P$, $A=Q\inv \tilde A P$ and $Q$ is an isomorphism,
        \begin{equation*}
            \tilde E P x_1 = 0, \tilde E P x_2 = \tilde A P x_1, \ldots, \tilde E P x_{p_{\rm chain}^{(E,A)}} = \tilde A P x_{p_{\rm chain}^{(E,A)}-1}.
        \end{equation*}
        Because $P$ is an isomorphism,  $Px_1 \neq 0$ and thus $Px_1 \in \ker \tilde E\backslash\{0\}$. If we assume that $Px_k \in \ker \tilde A$ for any $1\leq k \leq p_{\rm chain}^{(E,A)}-1$, then $0=Q\inv \tilde A P x_k = Ax_k$, which would be a contradiction to $x_k\in \X\backslash\ker A$. Therefore, $Px_k\notin \ker \tilde A$ for all $1\leq k \leq p_{\rm chain}^{(E,A)}-1$. Hence, $(Px_1, \ldots, Px_{p_{\rm chain}^{(E,A)}})$ denotes a chain of $(\tilde E, \tilde A)$ of length $p_{\rm chain}^{(E,A)}$ . Therefore, the chain index of $(\tilde E, \tilde A)$ is bounded from below by $p_{\rm chain}^{(E,A)}$ and, equivalently, the chain index of $(E,A)$ is bounded from below by $p_{\rm chain}^{(\tilde E,\tilde A)}$.
    \end{proof}

\section{Radiality index }
In this section we introduce a less well-known index, the \textit{radiality index}.

    \begin{defi}[\textbf{radiality index}]\hfill\\
        The \textit{radiality index} of $(E,A)$ is the smallest number $p_{\rm rad}^{(E,A)}\in \N_0$, such that there exists a $\omega \in \R$, $C>0$ with $(\omega, \infty)\subseteq \rho(E,A)$ and
        \begin{align}\label{rad-index}
            \begin{split}
                \left\Vert(\lambda_0 E-A)\inv E\cdot\ldots\cdot(\lambda_{p_{\rm rad}^{(E,A)}} E-A)\inv E\right\Vert &\leq C\prod_{k=0}^{p_{\rm rad}^{(E,A)}} \frac{1}{\left\vert\lambda_k-\omega\right\vert},\\
                \left\Vert E(\lambda_0 E-A)\inv \cdot\ldots\cdot E(\lambda_{p_{\rm rad}^{(E,A)}} E-A)\inv \right\Vert &\leq C\prod_{k=0}^{p_{\rm rad}^{(E,A)}} \frac{1}{\left\vert\lambda_k-\omega\right\vert}
            \end{split}
        \end{align}
        for all $\lambda_0,\ldots,\lambda_{p_{\rm{rad}}^{(E,A)}} >\omega$. The radiality index is called a \textit{complex radiality index}, denoted by $p_{\rm c, rad}^{(E,A)}$, if $\C_{\Real >\omega}\subseteq \rho(E,A)$ and \eqref{rad-index} holds for $p_{\rm{rad}}^{(E,A)}$.
    \end{defi}

    Clearly, $p_{\rm rad}^{(E,A)}\le p_{\rm c,rad}^{(E,A)}$. The radiality index $p=p_{\rm rad}^{(E,A)}\in\N_0$, originally known as \textit{weak $(E,p)$-radiality} \cite[p.~21]{fedorov-sviridyuk}, is not a commonly used  index  in finite-dimensional problems. However, the radiality index is one of the most useful index terms in infinite dimensions (see \cite{fedorov-sviridyuk, jacob-morris}).

    To get a better understanding for this definition one can start by looking at the radiality index with $p_{\rm rad}^{(E,A)}=0$, $\X=\Z$ and $E=I_{\X}$. Then, \eqref{rad-index} translates into the well known Hille-Yosida type estimate. This suggests that the radiality index implies further well-posedness results. In fact, as soon  as slightly stronger assumptions, namely  strong $(E,p)$-radiality, are considered, one obtains not only the well-posedness of the system, but also the existence of a Weierstraß-form \cite[Sec.~2.5 \&~2.6]{fedorov-sviridyuk}.

    \begin{prop}\label{rad-equiv}
        The radiality index is unique. In fact, let $(E,A) \sim (\tilde E, \tilde A)$. Then $p_{\mathrm{rad}}^{(E,A)}=p_{\mathrm{rad}}^{(\tilde E,\tilde A)}$ and $p_{\mathrm{c, rad}}^{(E,A)}=p_{\mathrm{c, rad}}^{(\tilde E,\tilde A)}$.
    \end{prop}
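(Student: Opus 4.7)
The plan is to mimic the proof of Proposition \ref{res-equiv}, but at the level of the right- and left-$E$-resolvents, since these are the objects whose products control the radiality bounds. Given the equivalence $E = Q^{-1}\tilde E P$ and $A = Q^{-1}\tilde A P$, I would first establish the conjugation identity
\begin{equation*}
(\lambda E - A)^{-1} = P^{-1}(\lambda \tilde E - \tilde A)^{-1} Q,
\end{equation*}
which in particular shows $\rho(E,A) = \rho(\tilde E, \tilde A)$ so that the set $(\omega,\infty)$ (resp.\ $\C_{\Real>\omega}$) for which the bounds must be verified is the same.

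Next I would transport the right- and left-$E$-resolvents through the equivalence. A direct computation using $E = Q^{-1}\tilde E P$ gives
\begin{equation*}
(\lambda E-A)^{-1}E = P^{-1}\bigl(\lambda \tilde E-\tilde A\bigr)^{-1}\tilde E P = P^{-1} R^{\tilde E}(\lambda,\tilde A) P,
\end{equation*}
and symmetrically $E(\lambda E-A)^{-1} = Q^{-1} L^{\tilde E}(\lambda,\tilde A) Q$. The crucial observation is that the inner factors $PP^{-1}$ (respectively $QQ^{-1}$) telescope when one forms the $(p+1)$-fold products appearing in \eqref{rad-index}, leaving
\begin{align*}
(\lambda_0 E-A)^{-1}E\cdots(\lambda_p E-A)^{-1}E &= P^{-1}\,R^{\tilde E}(\lambda_0,\tilde A)\cdots R^{\tilde E}(\lambda_p,\tilde A)\,P,\\
E(\lambda_0 E-A)^{-1}\cdots E(\lambda_p E-A)^{-1} &= Q^{-1}\,L^{\tilde E}(\lambda_0,\tilde A)\cdots L^{\tilde E}(\lambda_p,\tilde A)\,Q.
\end{align*}

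Taking norms and absorbing the factors $\|P\|\|P^{-1}\|$ and $\|Q\|\|Q^{-1}\|$ into a new constant $\tilde C$, I obtain the radiality estimates for $(\tilde E,\tilde A)$ with the \emph{same} $\omega$ and the \emph{same} exponent $p = p_{\rm rad}^{(E,A)}$. Hence $p_{\rm rad}^{(\tilde E,\tilde A)}\le p_{\rm rad}^{(E,A)}$, and the reverse inequality follows by interchanging the roles of $(E,A)$ and $(\tilde E,\tilde A)$ (using that $P^{-1}$ and $Q^{-1}$ are also bounded isomorphisms). The complex radiality index is handled by exactly the same argument applied on $\C_{\Real>\omega}$.

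The proof is essentially routine once the telescoping identity is noted; the only place where one has to be careful is keeping track that the interleaved $E$'s in the product are correctly converted into the conjugated right- or left-$E$-resolvents of $(\tilde E,\tilde A)$. This is the step I expect to be the main (modest) obstacle, but it is resolved by the computation above.
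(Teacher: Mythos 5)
Your proposal is correct and follows essentially the same route as the paper: both rest on the conjugation identity $P(\lambda E-A)^{-1}Q^{-1}=(\lambda\tilde E-\tilde A)^{-1}$, the telescoping of the conjugating isomorphisms in the $(p+1)$-fold products of right- and left-$E$-resolvents, and absorbing $\|P\|\|P^{-1}\|$ (resp.\ $\|Q\|\|Q^{-1}\|$) into the constant, with the reverse inequality by symmetry. No gaps.
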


    \begin{proof}
        Since $(E,A)\sim (\tilde E, \tilde A )$ there exists two isomorphisms $P\colon \X \to \tilde \X$, $Q\colon \Z \to \tilde \Z$, such that
        \begin{equation*}
            E = Q\inv \tilde E P \quad \text{and} \quad A = Q\inv \tilde A P.
        \end{equation*}
        Assume, that $(E,A)$ has radiality index $p_{\rm rad}^{(E,A)}$. Thus, there exists positive constants $C>0$, $\omega >0$, such that $(\omega, \infty)\subseteq \rho(E,A)$ and
        \begin{align*}
            \left\Vert (\lambda_0 E-A)\inv E\cdot\ldots\cdot(\lambda_{p_{\rm rad}^{(E,A)}} E-A)\inv E \right\Vert & \leq \frac{C}{(\lambda_0-\omega)\cdot\ldots\cdot(\lambda_{p_{\rm rad}^{(E,A)}} -\omega)}, \\
            \left\Vert E (\lambda_0 E-A)\inv \cdot\ldots\cdot E(\lambda_{p_{\rm rad}^{(E,A)}} E-A)\inv \right\Vert & \leq \frac{C}{(\lambda_0-\omega)\cdot\ldots\cdot(\lambda_{p_{\rm rad}^{(E,A)}} -\omega)},
        \end{align*}
        for all $\lambda_0,\ldots,\lambda_{p_{\rm rad}^{(E,A)}}\in\rho(E,A)$. Since
        \begin{equation*}
            P (\lambda E-A)\inv Q\inv = (\lambda \tilde E-\tilde A)\inv
        \end{equation*}
        for all $\lambda \in \rho(E,A)$ we derive $(\omega,\infty) \subseteq \rho(E,A)\subseteq\rho(\tilde E, \tilde A)$ and therefore
        \begin{align*}
            \Vert (\lambda_0 \tilde E-\tilde A)\inv \tilde E\cdot\ldots&\cdot(\lambda_{p_{\rm rad}^{(E,A)}} \tilde E-\tilde A)\inv \tilde E\Vert
            \\&\leq \left\Vert P \right\Vert \Vert (\lambda_0 E-A)\inv E\cdot\ldots\cdot(\lambda_{p_{\rm rad}^{(E,A)}} E-A)\inv E \Vert \Vert P\inv  \Vert \\
            &\leq \frac{\tilde C}{(\lambda_0-\omega)\cdot\ldots\cdot(\lambda_{p_{\rm rad}^{(E,A)}} -\omega)},\\
            \Vert \tilde E(\lambda_0 \tilde E-\tilde A)\inv \cdot\ldots&\cdot  \tilde E (\lambda_{p_{\rm rad}^{(E,A)}} \tilde E-\tilde A)\inv \Vert \\
            &\leq \left\Vert Q \right\Vert \Vert E(\lambda_0 E-A)\inv \cdot\ldots\cdot E(\lambda_{p_{\rm rad}^{(E,A)}} E-A)\inv  \Vert \left\Vert Q\inv  \right\Vert \\
            &\leq \frac{\tilde C}{(\lambda_0-\omega)\cdot\ldots\cdot(\lambda_{p_{\rm rad}^{(E,A)}} -\omega)}
        \end{align*}
        for all $\lambda_0, \ldots, \lambda_{\rm rad}^{(E,A)}>\omega$, for $\tilde C \coloneqq 2C\max\{\left\Vert P \right\Vert,\left\Vert P\inv \right\Vert,\left\Vert Q \right\Vert,\left\Vert Q\inv \right\Vert\}$. Consequently, $(\tilde E, \tilde A)$ has at most radiality index $p_{\rm rad}^{(E,A)}$ . By an identical argument,  $(E,A)$ has at most radiality index $p_{\rm rad}^{(\tilde E,\tilde A)}$. The equality $p_{\mathrm{c, rad}}^{(E,A)}=p_{\mathrm{c, rad}}^{(\tilde E,\tilde A)}$ is shown in the same way.
    \end{proof}

    \begin{ex}{\bf (A system with radiality index 0.)
        }
        We consider the dynamics on an interval $[a,b]$  of an undamped beam fixed at one end and free at the other are given by the partial differential equations
        \begin{subequations}
            \begin{align}
                \rho\frac{\partial^2 v(\x,t)}{\partial t^2}&=\alpha\dz{}\dz{v(\x,t)}-\gamma\beta\dz{}\dz{p(\x,t)}\\
                \mu\frac{\partial^2 p(\x,t)}{\partial t^2}&=\beta\dz{}\dz{p(\x,t)}-\gamma\beta\dz{}\dz{v(\x,t)},
            \end{align}\label{eq:beam1}
        \end{subequations}
        where $\x\in[a,b[$, $ t>0$, $v(\x,t)$ is the longitudinal displacement and $p(\x,t)$ electric charge. The material parameters are material density $\rho>0, $ magnetic permeability $\mu\ge 0 ,$ elastic stiffness  $\alpha_1>0$, impermittivity  $\beta>0$, piezoelectric coefficients $\gamma>0 $ and we define
        \begin{equation*}
            \alpha=\alpha_1+\gamma^2\beta ,
        \end{equation*}
        One choice of boundary conditions is subject to the boundary conditions
        \begin{subequations}
            \begin{align}
                v(a,t)&=0,\\
                p(a,t)&=0,\\
                \beta\frac{\partial p(b,t)}{\partial \x}-\gamma\beta\frac{\partial v(b,t)}{\partial \x}&=0, \\
                \alpha \frac{\partial v(b,t)}{\partial \x}-\gamma\beta\frac{\partial p(b,t)}{\partial \x}&=0.
                \end{align}\label{eq:beamBC}
        \end{subequations}
        The total energy is
        \begin{align}
            \Ha(t)= \frac{1}{2} \int\limits_a^b \rho \left(\dt{v(\x,t)}\right)^2+\alpha_1 \left(\dz{v(\x,t)}\right)^2 + \mu  \left(\dt{p(\x,t)}\right)^2 + \beta \left(\dz{p(\x,t)}-\gamma \dz{v(\x,t)}\right)^2   d \x\label{eq:Hexam} .
        \end{align}
        This model was shown to be a well-posed port-Hamiltonian system associated with a contraction semigroup \cite{MO2013}. However, there are quite a few choices for the  state variables. We use here a different choice of state variable, suggested by  Hans Zwart (personal communication):
        \begin{equation*}
                \z(\x,t)\coloneqq \begin{bmatrix}
                z_1(\x,t)\\z_2(\x,t)\\z_3(\x,t) \\ z_4(\x,t)
            \end{bmatrix}=
            \begin{bmatrix}
                \dz{v(\x,t)}\\
                \sqrt{\rho} \dt{v(\x,t)}\\
                \dz{p(\x,t)}\\
                \sqrt{\mu} \dt{p(\x,t)}\\
            \end{bmatrix} \, .
        \end{equation*}
        Defining
        \begin{equation*}
            Q\coloneqq \begin{bmatrix}
                \alpha & 0&-\gamma \beta&0 \\
                    0& I & 0 &0\\
                    -\gamma \beta & 0 & \beta &0 \\
                    0 & 0 & 0& I
            \end{bmatrix}, \quad
            E\coloneqq
            \begin{bmatrix}
                \sqrt{\rho} & 0 & 0 & 0 \\
            0&\sqrt{\rho} & 0&0 \\
            0& 0 &\sqrt{\mu} & 0 \\
            0 & 0 & 0 & \sqrt{\mu}
            \end{bmatrix} ,\quad
            P_1 \coloneqq
            \begin{bmatrix}
            0 &I& 0&0\\
            I& 0 & 0 &0\\
            0 & 0 & 0 & I\\
            0& 0 & I&0
            \end{bmatrix},
        \end{equation*}
        the PDAE can be written
        \begin{equation*}
            \dt{}E \z(\x,t)=A \z(\x,t),
        \end{equation*}
        where $A=P_1\dz{} Q$.
        This choice of state variables yields equations in the standard pH form in \cite{jacob-zwart}.

        Generally, $\mu$ is very small and it is often taken to be zero, which yields the {\em quasi-static} piezo-electric beam.
        If $\mu = 0 $ the operator $E$ becomes singular and the fourth state variable becomes identically zero. The PDAE becomes
        \begin{equation*}
            \dt{}  \begin{bmatrix} \sqrt{\rho} & 0 & 0 & 0 \\
                0&\sqrt{\rho} & 0&0 \\
                0& 0 &0 & 0 \\
                0 & 0 & 0 & 0
            \end{bmatrix}
            \begin{pmatrix}
                \dz{v(\x,t)}\\
                \sqrt{\rho} \dt{v(\x,t)}\\
                \dz{p(\x,t)}\\
                0
            \end{pmatrix}
                =
                P_1\dz{} Q
            \begin{pmatrix}
                \dz{v(\x,t)}\\
                \sqrt{\rho} \dt{v(\x,t)}\\
                \dz{p(\x,t)}\\
                0
            \end{pmatrix} .
        \end{equation*}
        Removing the fourth column of $E,  Q$ since the 4th variable is zero,
        we obtain
        \begin{equation*}
                \dt{}
            \begin{bmatrix}
                \sqrt{\rho} & 0 & 0  \\
                0&\sqrt{\rho} & 0 \\
                0& 0 &0  \\
                0 & 0 & 0  \end{bmatrix}
            \begin{pmatrix}
                \dz{v(\x,t)}\\
                \sqrt{\rho} \dt{v(\x,t)}\\
                \dz{p(\x,t)}\\
            \end{pmatrix}
                =
            \begin{bmatrix}
                0 &I& 0&0\\
                I& 0 & 0&0  \\
                0 & 0 & 0&I \\
                0& 0 & I&0
            \end{bmatrix}
            \dz{}
            { \begin{bmatrix}
                \alpha & 0&-\gamma \beta \\
                0& I & 0 \\
                -\gamma \beta & 0 & \beta  \\
                0 & 0 & 0
            \end{bmatrix}  }
            \begin{pmatrix}
                \dz{v(\x,t)}\\
                \sqrt{\rho} \dt{v(\x,t)}\\
                \dz{p(\x,t)}
            \end{pmatrix} .
        \end{equation*}
        This PDAE is radial of degree zero; see \cite[sect. IV]{jacob-morris}.
\end{ex}

    Next, we compare the radiality index with the resolvent  index.

    \begin{prop}
            If  the (complex) radiality index $p_{\rm rad}^{(E,A)}$ exists, then the (complex) resolvent index also exists . Furthermore,
            $p_{\rm rad}^{(E,A)}+1\geq \pres^{(E,A)}$ ($p_{\rm c, rad}^{(E,A)}+1\geq p_{\rm c,res\vphantom{d}}^{(E,A)}$).
    \end{prop}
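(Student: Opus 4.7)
Setting $p:=p_{\rm rad}^{(E,A)}$, the goal is to show $\|(\lambda E-A)^{-1}\|\leq C'|\lambda|^{p}$ for $\lambda>\omega$ sufficiently large, since this is precisely the condition $p_{\rm res}^{(E,A)}\leq p+1$. The plan starts from the DAE resolvent identity, obtained by subtracting $(\mu E-A)(\mu E-A)^{-1}=I$ from $(\lambda E-A)(\lambda E-A)^{-1}=I$ and premultiplying by $(\mu E-A)^{-1}$:
\begin{equation*}
(\lambda E-A)^{-1}-(\mu E-A)^{-1}=(\mu-\lambda)(\mu E-A)^{-1}E(\lambda E-A)^{-1},
\end{equation*}
and its iteration $p+1$ times, which, writing $R(\cdot):=(\cdot E-A)^{-1}$ for brevity, produces the finite telescoping expansion
\begin{equation*}
R(\lambda)=\sum_{k=0}^{p}(\mu-\lambda)^{k}(R(\mu)E)^{k}R(\mu)+(\mu-\lambda)^{p+1}(R(\mu)E)^{p+1}R(\lambda).
\end{equation*}

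Setting $\lambda_{0}=\cdots=\lambda_{p}=\mu$ in the radiality estimate gives $\|(R(\mu)E)^{p+1}\|\leq C/(\mu-\omega)^{p+1}$. I would then choose $\mu$ as an affine function of $\lambda$---specifically $\mu=(\lambda-\alpha\omega)/(1-\alpha)$ with $\alpha\in(0,1)$ fixed so that $C\alpha^{p+1}\leq 1/2$---so that $\mu-\omega=(\lambda-\omega)/(1-\alpha)$ and $|\mu-\lambda|=\alpha(\mu-\omega)$. This makes the remainder in the expansion contribute at most $\tfrac12\|R(\lambda)\|$, which can be absorbed on the left to yield
\begin{equation*}
\|R(\lambda)\|\leq 2\sum_{k=0}^{p}|\mu-\lambda|^{k}\|(R(\mu)E)^{k}R(\mu)\|.
\end{equation*}
Because both $|\mu-\lambda|$ and $\mu-\omega$ are of order $\lambda-\omega$ for this choice, the desired $\|R(\lambda)\|=O(|\lambda|^{p})$ bound will follow once each intermediate term $\|(R(\mu)E)^{k}R(\mu)\|$ is controlled at order $O((\mu-\omega)^{p-k})$.

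Bounding these intermediate products is the main obstacle, since the radiality estimate directly controls only the full $(p+1)$-fold product. My approach is to use the equivalent factored form $R(\lambda)=R(\mu)[I_{\Z}+(\lambda-\mu)ER(\mu)]^{-1}$ (which also follows from the resolvent identity) together with the polynomial identity
\begin{equation*}
(I+X)^{-1}=\Bigl(\sum_{k=0}^{p}(-X)^{k}\Bigr)(I+(-1)^{p}X^{p+1})^{-1}
\end{equation*}
applied to $X=(\lambda-\mu)ER(\mu)$: since $\|X^{p+1}\|\leq 1/2$ by construction, the second factor is bounded by a convergent Neumann series, and the complementary radiality bound $\|(ER(\mu))^{p+1}\|\leq C/(\mu-\omega)^{p+1}$ controls the polynomial part. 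Because the $k=0$ intermediate term is $\|R(\mu)\|$ itself, the argument is mildly self-referential and is completed by a bootstrap: continuity of the resolvent on $(\omega,\infty)$ gives an a priori bound on $\|R(\mu)\|$ for $\mu$ in a fixed compact set, and iterating the estimate extends the $O((\,\cdot\,-\omega)^{p})$ bound to all large $\lambda$. The complex-radiality case is handled identically, with the half-plane $\mathbb{C}_{\Real>\omega}$ in place of the ray $(\omega,\infty)$.
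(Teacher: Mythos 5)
Your expansion $R(\lambda)=\sum_{k=0}^{p}(\mu-\lambda)^{k}(R(\mu)E)^{k}R(\mu)+(\mu-\lambda)^{p+1}(R(\mu)E)^{p+1}R(\lambda)$ is correct, but the way you close it has a genuine gap. The radiality hypothesis controls only the full $(p+1)$-fold products $(R(\mu)E)^{p+1}$ and $(ER(\mu))^{p+1}$; it says nothing about the intermediate terms $\|(R(\mu)E)^{k}R(\mu)\|$ for $0\le k\le p$, and your factored form $R(\mu)\bigl(\sum_{k=0}^{p}(-X)^{k}\bigr)\bigl(I+(-1)^{p}X^{p+1}\bigr)^{-1}$ reproduces exactly the same intermediate powers in the polynomial factor, so it does not resolve this. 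Because you let $\mu=\mu(\lambda)\to\infty$, these terms are not constants, and for $k=0$ you need $\|R(\mu)\|=O((\mu-\omega)^{p})$ --- the very statement being proved. The proposed bootstrap does not repair this: your choice $\mu=(\lambda-\alpha\omega)/(1-\alpha)$ gives $\mu>\lambda$, so iterating the estimate moves outward to infinity rather than back to a compact set; and even after reversing the direction, the only available bound $\|(R(\mu)E)^{k}R(\mu)\|\le\|E\|^{k}\|R(\mu)\|^{k+1}$ makes the recursion polynomial of degree $p+1$ in $\|R(\mu)\|$, and composing it over the roughly $\log\lambda$ steps needed to reach a fixed compact set produces a super-polynomial bound, not $O(\lambda^{p})$.

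The missing idea is to use the radiality estimate with \emph{distinct} spectral parameters, which you never do. Keep $\mu>\omega$ \emph{fixed} and iterate the other form of the resolvent identity, $R(\lambda)=R(\mu)+(\mu-\lambda)R(\lambda)ER(\mu)$, to obtain
\begin{equation*}
R(\lambda)=\sum_{k=0}^{p}(\mu-\lambda)^{k}R(\mu)\bigl(ER(\mu)\bigr)^{k}+(\mu-\lambda)^{p+1}\bigl[(\lambda E-A)^{-1}E\bigr]\bigl[(\mu E-A)^{-1}E\bigr]^{p}R(\mu).
\end{equation*}
Now the intermediate terms are fixed constants multiplied by $|\mu-\lambda|^{k}=O(\lambda^{p})$, and the remainder is controlled by \eqref{rad-index} with $\lambda_{0}=\lambda$, $\lambda_{1}=\dots=\lambda_{p}=\mu$, which gives $\|(\lambda E-A)^{-1}E\,((\mu E-A)^{-1}E)^{p}\|\le C(\lambda-\omega)^{-1}(\mu-\omega)^{-p}$; hence the remainder is $O(\lambda^{p+1}/\lambda)=O(\lambda^{p})$ and no absorption or bootstrap is needed. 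This is essentially the argument of \cite[Lem.~3.1.1]{fedorov-sviridyuk}, to which the paper's own proof simply defers.
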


    \begin{proof}
        We have to show that there exists a $C>0$, $\omega>0$, such that $(\omega, \infty)\subseteq \rho(E,A)$ ($\C_{\Real >\omega}\subseteq \rho(E,A)$) and \eqref{def:resolvent-index} holds for all $\lambda \in(\omega,\infty)$ ($\lambda\in\C_{\Real >\omega}$) and a $p\leq p_{\rm{rad}}+1$ ($p\leq p_{\rm c,rad}+1$). The first part of this statement is implied by existence of the  radiality index. Hence, we only have to prove  \eqref{def:resolvent-index}. This has already been shown in \cite[Lem.~3.1.1]{fedorov-sviridyuk} for an $(E,p)$-sectorial operator pair $(E,A)$. However, since the definition of  $(E,p)$-sectorial introduced in \cite[sec.~3.1]{fedorov-sviridyuk} coincides with the radiality index except that \eqref{rad-index} holds for a sector $\left\{ \mu \in \C \,\middle|\, \left| \arg (\mu-\omega)\right| <\theta, \mu\neq \omega\right\}$ for a given $\omega\in \R$ and $\theta \in (\frac{\pi}{2},\pi)$, one can simply follow the proof of \cite[Lem.~3.1.1]{fedorov-sviridyuk} for positive $\lambda>\omega$ (for $\lambda\in\C_{\Real>\omega}$).
    \end{proof}



The following example shows that in general  existence of the resolvent index does not imply that the radiality index exists.

    \begin{ex} \label{ex:res-but-no-radiality}
        {\bf (A system where the resolvent index exists, but the radiality index does not.)} Let $\X=L^2(0,\infty) \times \R$ and define
        \begin{align*}
            E=\begin{bmatrix}I & 0\\ 0 & 0\end{bmatrix}, \quad A=\begin{bmatrix}\partial_x & 0\\\delta_0 & 1\end{bmatrix}
        \end{align*}
        with $\dom(A)\coloneqq H^1_0(0,\infty)\times \R$, where $\delta_0\colon L^2(0,\infty)\to\R, x\mapsto x(0)$. Let $\lambda >0$ and $\Big(\begin{smallmatrix}f \\ g\end{smallmatrix}\Big)\in \X$. Then
        \begin{align*}
            (\lambda E-A) \begin{pmatrix}x\\y\end{pmatrix} =\begin{pmatrix}f\\g\end{pmatrix}
        \end{align*}
        if and only if
        \begin{align*}
            \begin{pmatrix}x\\ y\end{pmatrix} = \begin{pmatrix}\int_{\cdot}^\infty \e^{\lambda (\cdot-s)}f(s)\dx[s]\\ -g-\int_0^\infty \e^{-\lambda s} f(s)\dx[s]\end{pmatrix}.
        \end{align*}
        Hence,
        \begin{align*}
            (\lambda E-A)\inv \begin{pmatrix}f\\ g\end{pmatrix} = \begin{pmatrix}\int_\cdot^\infty \e^{\lambda (\cdot-s)}f(s)\dx[s]\\-g -\int_0^\infty \e^{\lambda (0-s)}f(s)\dx[s] \end{pmatrix}
        \end{align*}
        and $\left\Vert (\lambda E-A)\inv\right\Vert\leq M = M \lambda^0$ for a $M>0$ for all $\lambda \geq 0$. Thus, $(E,A)$ has resolvent index $p_{\rm res}^{(E,A)} = 1$. Furthermore, we have
        \begin{align*}
            \left((\lambda E-A)\inv E\right)^{p+1} \begin{pmatrix}f\\ g\end{pmatrix} = \begin{pmatrix}
                \int_\cdot^\infty \left( \int_{s_{p+1}}^\infty \left( \ldots \left( \int_{s_2}^\infty\e^{\lambda (\cdot - s_1)} f(s_1)\dx[s_1]\right) \ldots\right)  \dx[s_p]\right) \dx[s_{p+1}]\\
                -\int_0^\infty \left( \int_{s_{p+1}}^\infty \left( \ldots \left( \int_{s_2}^\infty\e^{\lambda (0-s_1)} f(s_1)\dx[s_1]\right) \ldots\right)  \dx[s_p]\right) \dx[s_{p+1}]
            \end{pmatrix}.
        \end{align*}
        Let $f(\tau) = \tau \e^{-\lambda \tau}$. Then, $\left\Vert f \right\Vert_{L^2}=\frac{1}{\lambda ^{\frac{3}{2}}}$ and
        \begin{align}\label{eq:est}
            \begin{split}
                \left\Vert \left((\lambda E-A)\inv E\right)^{p+1} \right\Vert &\geq \left\Vert \left((\lambda E-A)\inv E\right)^{p+1}\begin{pmatrix}\lambda^{\frac{3}{2}} f \\ 0\end{pmatrix} \right \Vert \\
                &\geq \left| \int_0^\infty \left( \int_{s_{p+1}}^\infty \left( \cdots \left( \int_{s_2}^\infty\e^{-2\lambda s_1} s_1 \dx[s_1]\right) \right)  \dx[s_p]\right) \dx[s_{p+1}]\right| \\
                &=\frac{p+1}{2^{p+1}} \frac{1}{\lambda^{p+1-\frac{3}{2}}}
            \end{split}
        \end{align}
        for every $p\in \N$. Consequently, the radiality index does not exist. (Because if it exists, then there would exist $C>0$, $\omega >0$, such that
        \begin{align*}
            \left\Vert \left( (\lambda E-A)\inv E\right)^{p+1} \right\Vert \leq C \frac{1}{(\lambda-\omega)^{p+1}}
        \end{align*}
        for all $\lambda>\omega$, which would contradict \eqref{eq:est}.)
    \end{ex}

\section{Nilpotency index} \label{nilp-index}
Next, we are going to look at what is  probably the best known index,  the \textit{nilpotency index} (also known as the \textit{Weierstraß-index}). As the alternative term  suggests, the most important part of the definition is the existence of the Weierstraß form.

    \begin{defi}[\textbf{nilpotency index}]\hfill\\
        Assume, that the DAE \eqref{eqn:dae} has a Weierstraß form given by \eqref{eqn:form-inf}. Then, the \textit{nilpotency index} of $(E,A)$, denoted by $p_{\rm nilp}^{(E,A)}\in\N_0$, is the nilpotency degree of $N$ if it is present and $0$ if $N$ is absent. In the latter case one has $(E,A)\sim (I_{\Y^1},A_1)$ in  \eqref{eqn:form-inf}.
    \end{defi}
\begin{prop}\label{nilp-unique}
    Assume that the DAE \eqref{eqn:dae} has a Weierstraß form. Let $\lambda \in\C$ such that $\lambda E-A$ is bijective (which exists by our regularity assumption on \eqref{eqn:dae}). Then the nilpotency index of $(E,A)$ is
    is the smallest number $k\in\N$, such that
\[\ker \big((\lambda E-A)^{-1}E\big)^k=\ker \big((\lambda E-A)^{-1}E\big)^{k+1}.\]
In particular, the nilpotency index is well-defined.
\end{prop}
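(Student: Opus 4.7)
The plan is to exploit the Weierstraß form \eqref{eqn:form-inf} to reduce the kernel characterisation to the purely nilpotent block, and then to match the stabilisation index of $\ker N^k$ with the nilpotency degree $p$.

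First, by the equivalence $(E,A)\sim(\tilde E,\tilde A)$ there exist isomorphisms $P,Q$ with $E=Q\inv\tilde EP$ and $A=Q\inv\tilde AP$. A direct manipulation yields $(\lambda E-A)\inv E = P\inv\bigl((\lambda\tilde E-\tilde A)\inv\tilde E\bigr)P$, so iterating and taking kernels,
\[\ker\bigl((\lambda E-A)\inv E\bigr)^k = P\inv\,\ker\bigl((\lambda\tilde E-\tilde A)\inv\tilde E\bigr)^k.\]
Since the stabilisation index of the left-hand chain is an intrinsic invariant of $(E,A)$, once I match it with $p$ this will simultaneously prove well-definedness (independence of the chosen Weierstraß form).

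Second, I would compute the block-diagonal expression
\[(\lambda\tilde E-\tilde A)\inv\tilde E = \begin{bmatrix}(\lambda I_{\Y^1}-A_1)\inv & 0\\0 & (\lambda N-I_{\Y^2})\inv N\end{bmatrix},\]
note that $\lambda\in\rho(E,A)$ forces $\lambda\in\rho(A_1)$ so that the first block is bijective and drops out of the kernel, and use that $N$ commutes with $\lambda N-I$ (hence with $(\lambda N-I)\inv$) to write $\bigl((\lambda N-I)\inv N\bigr)^k = (\lambda N-I)^{-k}N^k$. Bijectivity of $(\lambda N-I)^{-k}$ then gives $\ker\bigl((\lambda N-I)\inv N\bigr)^k=\ker N^k$. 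Consequently the stabilisation index of the original kernel chain equals the smallest $k$ with $\ker N^k=\ker N^{k+1}$.

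Finally, I would match this with the nilpotency degree $p$. Since $N^p=0$, $\ker N^p=\Y^2=\ker N^{p+1}$, so the stabilisation index is at most $p$. Conversely, for $k<p$ any equality $\ker N^k=\ker N^{k+1}$ propagates upward by the standard argument (if $x\in\ker N^{k+j+1}$ then $Nx\in\ker N^{k+j}=\ker N^k$, so $x\in\ker N^{k+1}=\ker N^k$), forcing $\ker N^k=\ker N^p=\Y^2$ and hence $N^k=0$, which contradicts the minimality of $p$. If $N$ is absent then $(\lambda E-A)\inv E$ reduces to $(\lambda I-A_1)\inv$, which is injective, so $\ker^0=\ker^1=\{0\}$ and the stabilisation index is $0=p_{\rm nilp}^{(E,A)}$. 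The only real delicacy I anticipate is bookkeeping around the commutativity of $N$ with $(\lambda N-I)\inv$ and the fact that the identification is $\lambda$-independent; the remaining steps are formal.
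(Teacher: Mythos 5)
Your proposal is correct and follows essentially the same route as the paper: conjugate $(\lambda E-A)^{-1}E$ into the Weierstraß coordinates, observe that the $k$-th power is block diagonal with an injective first block and second block having kernel $\ker N^k$, and identify the stabilisation index of $\ker N^k$ with the nilpotency degree. The paper states this computation in two lines and leaves the kernel-stabilisation argument implicit, whereas you spell it out (including the propagation argument for $k<p$ and the $N$-absent case); you also correctly conjugate by $P$ rather than $Q$, fixing what appears to be a typo in the paper's displayed formula.
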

\begin{proof}
Assume that
 $P\colon \X \to \Y^1\times \Y^2$, $Q\colon \Z \to \Y^1\times \Y^2$, such that
 \[E = Q\inv \begin{bmatrix}I_{\Y^1} & 0\\0 & N\end{bmatrix} P, \quad  A = Q\inv \begin{bmatrix}A_1 & 0\\0 & I_{\Y^2}\end{bmatrix} P.\]
Then the result follows, since for all $k\in\N$,
\[\big((\lambda E-A)^{-1}E\big)^k=Q
\begin{bmatrix}(\lambda I_{\Y^1}-A_1)^{-k} & 0\\0 & (\lambda N-I_{\Y^2})^{-1}N^k\end{bmatrix}Q^{-1}.\qedhere
\]
\end{proof}

    According to this definition, the nilpotency index is always a 
    natural number (including $0$).
    This means that the nilpotency index can never be $\infty$, since we require a nilpotent operator $N$ in \eqref{eqn:form-inf}. To handle this more general situation, one would have to replace the nilpotency of $N$ with  \textit{quasi-nilpotency}, namely with $\sigma(N)=\{0\}$. For example, $N\colon \ell^2\to\ell^2$, $(x_1, x_2, , x_3,\ldots)\mapsto (0, \frac{x_1}{2^1}, \frac{x_2}{2^2}, \frac{x_3}{2^3},\ldots)$. 

    A disadvantage of the nilpotency index is that it  requires the Weierstraß form. In finite dimensions one only needs the system to be regular to obtain such a form,  see, for instance, \cite[Def.~2.9]{Kunkelmehrmann} for a constructive procedure. There is no standard procedure to obtain the Weierstraß form for infinite-dimensional systems, and, in fact, it has not been proven that such a form always exists.

   \begin{prop} \label{prop:nilp}
        \begin{enumerate}[(a)]
            \item \label{chain+1>=nilp}
                If the nilpotency index exists, then the chain index also exists with $p_{\rm nilp}^{(E,A)}= p_{\rm chain}^{(E,A)}$.
            \item \label{res>=nilp}
                If the nilpotency index and resolvent index exist, then $\pres^{(E,A)}\geq p_{\rm nilp}^{(E,A)}$.
            \item \label{rad+1>=nilp}
                If the nilpotency index and radiality index exist, then $p_{\rm rad}^{(E,A)}+1\geq p_{\rm nilp}^{(E,A)}$.
        \end{enumerate}
    \end{prop}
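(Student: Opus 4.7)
My approach in all three parts is to exploit that the chain, resolvent, and radiality indices are invariant under the equivalence $\sim$ (Propositions \ref{chain-equiv}, \ref{res-equiv}, \ref{rad-equiv}). This lets me assume throughout that $(E,A)$ is already in Weierstraß form
\[(E,A)=\left(\begin{bmatrix}I_{\Y^1} & 0\\0 & N\end{bmatrix},\begin{bmatrix}A_1 & 0\\0 & I_{\Y^2}\end{bmatrix}\right),\]
with $N$ nilpotent of degree $q=p_{\rm nilp}^{(E,A)}$, and then do all bookkeeping block-wise. The invariance step is what makes the Weierstraß form genuinely useful here, even though the form itself is hard to exhibit in infinite dimensions.

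For \eqref{chain+1>=nilp}, I would write each chain element as $x_k=(x_k^1,x_k^2)\in\Y^1\times\Y^2$. The relations $Ex_1=0$ and $Ex_{k+1}=Ax_k$ then decouple into $x_{k+1}^1=A_1 x_k^1$ with $x_1^1=0$, and $Nx_{k+1}^2=x_k^2$ with $Nx_1^2=0$. The first system forces $x_k^1\equiv 0$ throughout, so everything is controlled by the second. Setting $y\coloneqq x_p^2$, induction gives $x_k^2=N^{p-k}y$, the condition $Ex_1=0$ becomes $N^p y=0$, and the nontriviality constraints $x_k\notin\ker A$ for $k\le p-1$ (which, in block form, say $x_k^2\neq 0$) collapse to the single inequality $N^{p-1}y\neq 0$. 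Hence a chain of length $p$ exists iff one can find $y\in\Y^2$ with $N^{p-1}y\neq 0=N^p y$, and the supremum is exactly the nilpotency degree of $N$.

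For \eqref{res>=nilp}, the Weierstraß form yields the block-diagonal resolvent
\[(\lambda E-A)\inv=\begin{bmatrix}(\lambda I-A_1)\inv & 0\\0 & (\lambda N-I)\inv\end{bmatrix},\]
and nilpotency of $N$ gives the finite expansion $(\lambda N-I)\inv=-\sum_{k=0}^{q-1}\lambda^k N^k$. Picking $y$ with $N^{q-1}y\neq 0$ and evaluating $(\lambda N-I)\inv y$, the leading term $-\lambda^{q-1}N^{q-1}y$ dominates the lower-order contributions for large $\lambda$, so $\|(\lambda E-A)\inv\|\ge c|\lambda|^{q-1}$. Comparing with the assumed bound $\|(\lambda E-A)\inv\|\le C|\lambda|^{\pres^{(E,A)}-1}$ forces $\pres^{(E,A)}\ge q$.

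Part \eqref{rad+1>=nilp} follows by concatenation: the preceding proposition gives $p_{\rm rad}^{(E,A)}+1\ge\pres^{(E,A)}$, and \eqref{res>=nilp} gives $\pres^{(E,A)}\ge p_{\rm nilp}^{(E,A)}$. The main technical obstacle is the bookkeeping in \eqref{chain+1>=nilp}: one must carefully verify that the conditions $x_k\notin\ker A$ really reduce to a single inequality on $N^{p-1}y$, and that no cleverly chosen chain can sneak length out of the $A_1$-block. Everything else is essentially block-matrix calculus once the Weierstraß form is in place.
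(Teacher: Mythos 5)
Your proofs of parts (a) and (b) are correct and essentially identical to the paper's: reduce to the Weierstra\ss{} form via the invariance propositions, observe that the first block of any chain vanishes so that a chain of length $p$ is equivalent to the existence of $y$ with $N^{p-1}y\neq 0=N^py$, and for (b) compare the finite Neumann expansion $(\lambda N-I)^{-1}=-\sum_{k=0}^{q-1}(\lambda N)^k$, whose norm grows like $\lambda^{q-1}$, against the assumed polynomial resolvent bound. Where you genuinely diverge is part (c): you obtain it as a corollary by chaining the earlier proposition ``existence of the radiality index implies existence of the resolvent index with $p_{\rm rad}^{(E,A)}+1\geq \pres^{(E,A)}$'' with your part (b). The paper instead proves (c) directly, applying the radiality estimate to $\big((\lambda N-I)^{-1}N\big)^{p_{\rm rad}^{(E,A)}+1}$, expanding it as a polynomial in $\lambda N$ whose lowest-order term is $(\lambda N)^{p_{\rm rad}^{(E,A)}+1}$, and noting that boundedness of the right-hand side as $\lambda\to\infty$ forces $N^{p_{\rm rad}^{(E,A)}+1}=0$. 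Your route is shorter and logically valid given the results stated earlier in the paper, but it leans on that earlier proposition, whose proof the paper only sketches by reference to the $(E,p)$-sectorial literature; the paper's direct argument for (c) is self-contained within the Weierstra\ss{}-form computation and does not require the resolvent index at all. Both are acceptable; no gaps.
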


    \begin{proof}
        We start with the proof of Part (\ref{chain+1>=nilp}).
        Since $(E,A)\sim (\tilde E, \tilde A)\coloneqq \left(\Big[\begin{smallmatrix} I_{\Y^1} & 0\\0 & N\end{smallmatrix}\Big], \Big[\begin{smallmatrix} A_1 & 0\\0 & I_{\Y^2}\end{smallmatrix}\Big] \right)$ for a Hilbert space $\Y=\Y^1\times \Y^2$, where $N$ has nilpotency degree $p_{\rm nilp}^{(E,A)}$, there exists two isomorphisms $P\colon \X \to \Y^1\times \Y^2$, $x\mapsto (P_1x,P_2x)$, $Q\colon \Z \to \Y^1\times\Y^2$, $z\mapsto (Q_1 z, Q_2 z)$, with
        \begin{equation}\label{dae-equiv}
            E=Q\inv \begin{bmatrix} I_{\Y^1} & 0\\0 & N\end{bmatrix}  P \quad \text{and} \quad A = Q\inv \begin{bmatrix} A_1 & 0\\0 & I_{\Y^2}\end{bmatrix} P.
        \end{equation}
        For simplicity we will show, that $p_{\rm nilp}^{(\tilde E,\tilde A)} = p_{\rm chain}^{(\tilde E,\tilde A)}$ (because then the rest follows from Proposition \ref{chain-equiv} and \ref{nilp-unique}). Let $x_k=\left(\begin{smallmatrix} x_{k,1}\\ x_{k,2}\end{smallmatrix}\right)$, $k=1, \ldots, p$ denote an arbitrary chain of length $p-1$. Then $x_{k,1}=0$ for all $k=1,\ldots,p$ and
        \begin{equation*}
        \tilde E x_1 =0, \tilde E x_2 = \tilde A x_1, \quad \ldots \quad ,\tilde E x_p = \tilde A x_{p-1}.
        \end{equation*}
        Hence $Nx_{l,2} = x_{l-1,2}$ for all $2\leq l$ and $x_{1,2} = N^1x_{2,2} = \ldots = N^{p-1} x_{p,2}$ . Since $x_{1,2} \neq 0$, $p>p_{\rm nilp}^{(E,A)}$ is not possible. Thus, the existence of a Weierstraß form implies that the chain index also exists and that it is at most $p_{\rm nilp}^{(E,A)}$. In order to prove that these two index-terms are equal, we need to show that there does exists a chain of length $p_{\rm nilp}^{(E,A)}$. This follows directly by choosing $x_k\coloneqq \left(\begin{smallmatrix} x_{k,1}\\ x_{k,2}\end{smallmatrix}\right) = \left(\begin{smallmatrix} 0 \\ N^{p_{\rm nilp}^{(\tilde E,\tilde A)}-k}z\end{smallmatrix}\right)$, $k=1,\ldots, p_{\rm nilp}^{(\tilde E,\tilde A)}$, for a $z\in \Y^2$ such that $N^{p_{\rm nilp}^{(\tilde E,\tilde A)}}z=0$ and $N^{p_{\rm nilp}^{(\tilde E,\tilde A)}-1}z\neq 0$.

        We now prove Part (\ref{res>=nilp}). From the definition of the resolvent index, there exists $C>0, \omega >0$, such that $(\omega,\infty)\subseteq \rho(E,A)$ and
        \begin{equation*}
            \left\Vert(\lambda E-A)\inv\right\Vert\leq C \left\vert\lambda\right\vert^{\pres^{(E,A)}-1}.
        \end{equation*}
        By Proposition \ref{res-equiv} ,  for some $\tilde C>0$
        \begin{align}\label{eqn:1}
            \left\Vert \begin{bmatrix}(\lambda I_{\Y^1} - A_1)\inv & 0\\0&(\lambda N-I_{\Y^2})\inv\end{bmatrix}\right\Vert  \leq \tilde C \left\vert\lambda\right\vert^{\pres^{(E,A)}-1}
        \end{align}
        for all $\lambda>\omega$. Since $N$ is nilpotent with nilpotency degree $p_{\rm nilp}^{(E,A)}$ , $(\lambda N-I_{\Y^2})\inv = -\sum_{i=0}^{p_{\rm nilp}^{(E,A)}-1} (\lambda N)^i$ and together with \eqref{eqn:1} we derive
        \begin{align*}
            \left\Vert(\lambda N-I_{\Y^2})\inv\right\Vert  &= \lambda^{p_{\rm nilp}^{(E,A)}-1}\left\Vert I_{\X^2}\frac{1}{\lambda^{p_{\rm nilp}^{(E,A)}-1}} + N \frac{1}{\lambda^{p_{\rm nilp}^{(E,A)}-2}} + \ldots + N^{p_{\rm nilp}^{(E,A)}-2} \frac{1}{\lambda} + N^{p_{\rm nilp}^{(E,A)}-1} \right\Vert\\
            &\leq \tilde C \lambda^{\pres^{(E,A)}-1}
        \end{align*}
        for all $\lambda >\omega$ and therefore $p_{\rm nilp}^{(E,A)}\leq \pres^{(E,A)}$.

        Finally, in order to prove Part (\ref{rad+1>=nilp}) we assume that the radiality index $p_{\rm rad}^{(E,A)}$ exists. That is, there exists  $C>0$ and  $\omega>0$ such that
        \begin{align*}
            \left\Vert ((\lambda N - I_{Y^2})\inv N)^{p_{\rm rad}^{(E,A)}+1} \right\Vert & \leq \left\Vert ((\lambda \tilde E - \tilde A)\inv \tilde E)^{p_{\rm rad}^{(E,A)}+1}\right\Vert\\
            & \leq \left\Vert P\inv \right\Vert  \left\Vert ((\lambda E-A)\inv E)^{p_{\rm rad}^{(E,A)}+1}\right\Vert \left\Vert P \right\Vert \\
            & \leq \frac{C}{(\lambda-\omega)^{p_{\rm rad}^{(E,A)}+1}}
        \end{align*}
        for all $\lambda >\omega$. Now, $((\lambda N-I_{\Y^2})\inv N)^{p_{\rm rad}^{(E,A)}+1}=\frac{1}{\lambda^{p_{\rm rad}^{(E,A)}+1}}\left(-\sum_{k=1}^{p_{\rm nilp}^{(E,A)}-1} (\lambda N)^k\right)^{p_{\rm rad}^{(E,A)}+1}$ is a polynomial of degree $1\leq \rm{deg}\leq (p_{\rm nilp}^{(E,A)}-1)$. Thus,
            \begin{equation}\label{eq:estimate}
                \left\Vert  \left(-\sum_{k=1}^{p_{\rm nilp}^{(E,A)}-1} (\lambda N)^k\right)^{p_{\rm rad}^{(E,A)}+1}\right\Vert \leq C \frac{\lambda^{p_{\rm rad}^{(E,A)}+1}}{(\lambda-\omega)^{p_{\rm rad}^{(E,A)}+1}}.
            \end{equation}
       The left-hand-side contains terms of the form $(\lambda N)^n$ where $n \geq p_{\rm rad}^{(E,A)}+1. $ Since it is bounded by   the right-hand-side of \eqref{eq:estimate} which is bounded as $\lambda \to \infty , $ $p_{\rm nilp}^{(E,A)}\leq p_{\rm rad}^{(E,A)}+1$, as was to be proven.
    \end{proof}


    \begin{remark}
        For non-negative and self-adjoint $E\in\L(\X)$ and dissipative $A\colon\dom(A)\subseteq\X\to\X$ the nilpotency index and the chain index is at most $2$. This follows directly from Theorem \ref{res-index-bound} and Proposition \ref{prop:nilp}.
    \end{remark}



    \begin{ex} {\bf (A system with $p_{\rm rad}^{(E,A)}= 1$ and $p_{\rm nilp}^{(E,A)}=2$)}
        Let us recall Example \ref{never-ending-example}. Defining
         \begin{align*}
            R_{1,s}&=(sI-A_o)^{-1} (I-B G(s)^{-1} C (sI-A_o)^{-1}) ,\\  R_{2,s}&=G(s)^{-1} C (sI-A_o)^{-1},\\ L_{2,s}&=(sI-A_o)^{-1} B G(s)^{-1}
        \end{align*}
        one has
        \begin{align*}
            (sE-A)^{-1} E &= \bem R_{1,s} & 0 \\  R_{2,s} &0  \enm , \\[1ex] E (sE-A)^{-1}  &= \bem R_{1,s} & L_{2,s} \\ 0 &0 \enm .
        \end{align*}

        As a particular example, let $\W = L^2 (0,1)$,
        \begin{equation}
            A_o z = z^{\prime \prime} , \quad D(A_o) = \{w \in H^2 (0,1) \mid z^\prime (0) =z^\prime (1) = 0 \} , \quad b=c= \mathds{1} .
            \label{heat-eg}
        \end{equation}
        With these definitions of $A_o$, $B$ and $C$,
        \begin{align*}
            (sI-A_o)\inv Bu &= \frac{1}{s}\mathds{1}u,\\
            C(sI-A_o)\inv z &= \frac{1}{s}\langle z, \mathds{1} \rangle,\\
            G(s) &= C(sI-A_o)\inv B = \frac{1}{s}
        \end{align*}
        and so  $R_{1,s}z=(sI-A_o)\inv z - \mathds{1}\langle z,1\rangle \frac{1}{s}$, $R_{2,s} z = \langle z,\mathds{1}\rangle$, $L_{2,s} u = \mathds{1}u$.
      Since $R_{2,s}$ is independent of $s$, the radiality degree must be larger than $0$.

      Define the projection onto $\W_1 \coloneqq \ker C \subset \W$,
	\begin{align*}
            Q_c z &\coloneqq z -\frac{\la z , c \ra}{\la c,c\ra} \, c .
        \end{align*}
        Then, with $\W_2 \coloneqq  {\rm span  \, } c$, $Q_c$ splits $\W$ into $\W_1 \oplus  \W_2$.
        It is easy to see that for $\alpha \mathds{1}\in {\rm span \,} c$  $R_{1,s} \alpha \mathds{1}=0$ and for $z \in \ker C$  $R_{1,s}z=(sI-A_o)\inv z\in \ker C$. Thus, for $w = \alpha z\mathds{1}\in W_1\oplus W_2$
        \begin{align*}
            (sE-A)\inv E(\mu E-A)\inv E \begin{pmatrix} z+\alpha\mathds{1} \\ u  \end{pmatrix} &= \bem R_{1,s}R_{1,\mu} & 0 \\  R_{2,s}R_{1,\mu} &0  \enm \begin{pmatrix} z+\alpha\mathds{1} \\ u  \end{pmatrix} = \begin{pmatrix} R_{1,s}R_{1,\mu}z
            \\ 0  \end{pmatrix}, \\
            E(sE-A)\inv E(\mu E-A)\inv \begin{pmatrix} z+\alpha\mathds{1} \\ u  \end{pmatrix} &= \bem R_{1,s}R_{1,\mu} & R_{2,s}L_{2,\mu}  \\ 0 & 0\enm \begin{pmatrix} z+\alpha\mathds{1} \\ u  \end{pmatrix} = \begin{pmatrix} R_{1,s}R_{1,\mu}z  \\ 0  \end{pmatrix}.
        \end{align*}
        Since $A_o$ is the generator of a contraction semigroup,
        \begin{align*}
            \left\Vert R_{1,s}R_{1,\mu}\right\Vert \leq \frac{1}{s\mu},\quad s,\mu>0.
        \end{align*}
        Hence, the radiality index $(E,A)$ is  $1$.

        Furthermore, it is possible to rewrite $(E,A)$ into a Weierstraß form with $p_{\rm nilp}^{(E,A)}=2$.
       For more concise notation,  define $\tilde C u \coloneqq \frac{1}{\langle c,c \rangle} cu .$

       We can define the isomorphisms $U\colon \W\times \C \to \W_1 \times\W_2\times \C$ and $V\colon \W_1 \times \W_2\times \C \to \W \times \C$
        \begin{equation*}
            U \coloneqq
            \bem
            Q_c & 0 \\[1.5ex]
                0 & \tilde C\\[1.5ex]
                       C & 0
            \enm, \qquad
            V \coloneqq
            \bem
                I & I & 0\\
                     0 & 0 & I
            \enm
        \end{equation*}
        which have inverses
        \begin{equation*}
            U^{-1} \coloneqq
            \bem
                I & A_o & B\\[1.5ex]
                0 & C & 0
            \enm, \qquad
            V^{-1} \coloneqq
            \bem
                Q_c & 0 \\
                I-Q_c & 0\\
                       0 & I
            \enm \, .
        \end{equation*}
        These mappings will be applied to \eqref{eq-zero1} to obtain a splitting of the system  into equations on $\W_1 \times \W_2 \times \C$.
        Noting that
        \begin{itemize}
            \item
            if $z \in \W_1, $ $A_o z \in \W_1 ,$
            \item $\tilde C C = I - Q_c, $ $=0 $ on $\ker C$, $I$ on span    $c$
            \item $C B=1,$
        \end{itemize}
        the isomorphisms $U$ and $V$  lead to
        \begin{align*}
            \tilde E&\coloneqq U\bem I&0 \\ 0&0\enm V
            &  \tilde A & \coloneqq U\bem A_o&B \\ C&0\enm V\\
               &=  \bem Q_c &Q_c &0 \\ 0&0&0 \\ \tilde C & \tilde C&0 \enm  \, , \;& &=\bem  Q_cA_o-Q_c A_o \tilde C C & Q_cA_o-Q_c A_o \tilde C C&0 \\\tilde C C &\tilde C C&0 \\ \tilde C(A_o) &\tilde C A_o&1 \enm \, \\
            &=  \bem I &0 &0 \\ 0&0&0 \\ 0 & \tilde C&0 \enm  \, & &=  \bem  A_o &0&0 \\ 0&I&0 \\ 0&0&1 \enm  .
        \end{align*}
        Define
        \begin{equation*}
            N= \bem 0&0 \\ \tilde C&0 \enm,\qquad
            \tilde A_o z=z^{\prime \prime}
        \end{equation*}
        with domain $D(A_o) \cap \ker C,$ which is dense in $\ker C $ \cite{Morris_rebarber_2007}.
        The operator $A_o$  will generate a $C_0$-semigroup on $\W_1 = \ker C .$
        Via the isomorphisms $U$ and $V$ the system \eqref{eq-zero1} is equivalent to the Weierstraß form
        \begin{equation}
            \frac{{\rm d}}{{\rm d}t} \bem I_{\W_1} & 0 \\ 0 & N \enm \begin{pmatrix} z_1 (t)\\ z_2 (t) \end{pmatrix} = \bem \tilde A_o & 0 \\ 0 & I_{\W_2 \times \mathbb C} \enm  \begin{pmatrix} z_1 (t)\\ z_2 (t) \end{pmatrix}.
            \label{eq-zero1-wei}
        \end{equation}
        A simple calculation shows that $N^2 =0$, and thus, the nilpotency index of the system is $2$.

        Systems of the form \eqref{eq-zero1} are known as Hessenberg index 2 systems in the finite-dimensional situation, see for example, \cite{LamourMaerzTischendorf2013}.


        This problem is identical to the question of establishing the zero dynamics of a system with state-space  realization $(A_o, B,C)$. That is, finding the largest space $\W_1 $ on which for initial conditions in $\W_1$ the dynamics remain in $\W_1$.
        Clearly $\W_1 \subseteq \ker C$. In this case, the largest space is $\W_1 = \ker C $. A  more general situation with $c \in D(A_o^*)$ and $ b \neq c$ is described  in \cite{Zwart-book}. The case where $c \notin D(A_o^*) $ is treated in \cite{Morris_rebarber_2007}.
           If $c \notin D(A_o^*)$, in general the zero dynamics are only associated with    an integrated semigroup \cite{Morris_rebarber_2007}.
           A similar type of construction was shown to hold for a class of boundary control systems with collocated observation; that is, the unbounded generalization of $\la b,c\ra\neq 0$ in \cite{ReisSelig} and for diffusion problems on an interval in \cite{Byrnes2006}.
    \end{ex}

    \begin{ex}\label{ex1}
    {\bf (A system where the nilpotency index exists, but resolvent index does not.)}
        Let $A_0=-\partial^2_x+ix$ be the complex Airy operator on $\X^1=L^2(\R)$ with
        \begin{equation*}
            \dom(A_0)=\left\{u \in H^2(\R) \,\middle|\, xu\in L^2(\R)\right\}.
        \end{equation*}
        By \cite{Helffer, ArnalSiegl2022}
        we have $\sigma(A_0)=\emptyset$ and there exists a constant $C>0$, such that
        \begin{equation*}
            \left\Vert(\lambda I_{\X^1}-A_0)\inv\right\Vert= C (\Real \lambda)^{-\frac{1}{4}} \e^{\frac{4}{3}(\Real \lambda)^{\frac{3}{2}}}
        \end{equation*}
        for all $\Real \lambda >0$. Thus, $\left\Vert(\lambda I_{\X^1}-A_0)\inv\right\Vert\to\infty$ for $\Real \lambda \to\infty$.
        Let $N\colon \X^2\to\X^2$ be a nilpotent operator of degree $p\in \N$. Then
        \begin{equation*}
            \frac{{\rm d}}{{\rm d}t}
            \underbrace{\begin{bmatrix}I_{\X^1}&0\\0&N\end{bmatrix}}_{=E}
            \begin{pmatrix}x_1\\x_2\end{pmatrix} =
            \underbrace{\begin{bmatrix}A_0&0\\0&I_{\X^2}\end{bmatrix}}_{=A}
            \begin{pmatrix}x_1\\x_2\end{pmatrix}
        \end{equation*}
        defines a DAE with nilpotency index $p=p_{\rm nilp}^{(E,A)}$, but since $\left\Vert(\lambda I_{\X^1}-A_0)\inv\right\Vert$ grows exponentially the resolvent index does not exist.
    \end{ex}

    \begin{ex} {\bf (A system where the nilpotency index exists, but radiality index does not.)}
        Example \ref{ex1} shows that in general the existence of the nilpotency index does not imply the existence of the radiality index.
    \end{ex}

    \begin{ex} {\bf (A system where the resolvent index exists, but nilpotency index does not.)}
Consider the system
\begin{align}
\tfrac{\partial}{\partial t}x_1(t,\xi)&=-\tfrac{\partial}{\partial \xi}x_1(t,\xi),\label{eq:DAEex1}\\
0&=-x_1(t,0),\label{eq:DAEex2}\\
0&=-x_1(t,1)+x_2(t),\label{eq:DAEex3}\\
\tfrac{\rm d}{{\rm d} t}x_2(t)&=x_3(t),\label{eq:DAEex4}
\end{align}
which corresponds to a~differential-algebraic equation $\frac{{\rm d}}{{\rm d}t}Ex(t)=Ax(t)$ with
\begin{equation}
\begin{aligned}
\X&=L^2([0,1])\times\C^2,\quad \dom(A)=H^1([0,1])\times\C^2,\\
\Z&=L^2([0,1])\times\C^3,
\end{aligned}\label{eq:difftransportsp}
\end{equation}
and
\begin{equation}E=\begin{bmatrix}I_{L^2}&0&0\\0&0&0\\0&0&0\\0&1&0\end{bmatrix},\quad A=\begin{bmatrix}-\frac{\partial}{\partial \xi}&0&0\\-\delta_0&0&0\\-\delta_1&1&0\\0&0&1\end{bmatrix},\label{eq:difftransportop} 
\end{equation}
where $\delta_\xi\in H^1([0,1])^*$ is the evaluation operator at $\xi\in[0,1]$.
The resolvent fulfills, for all $x_1,x_2,x_2\in\C$, $x_2\in L^2([0,1])$,
\[(\lambda E-A)^{-1}\begin{pmatrix}x_1\\x_2\\x_3\\x_4\end{pmatrix}=\begin{pmatrix}{\rm e}^{-s}x_3+\int_0^1{\rm e}^{-s(1-\xi)}f(\xi)d\xi\\s{\rm e}^{-s}x_3+s\int_0^1{\rm e}^{-s(1-\xi)}f(\xi)d\xi-x_2\\{\rm e}^{-s\cdot}x_3+\int_0^\cdot{\rm e}^{-s(\cdot-\xi)}f(\xi)d\xi\end{pmatrix},\]
which gives $\pres^{(E,A)}=2$. It has however been shown in \cite{ReiTis05} that the system has no nilpotency index.
\end{ex}

As we have already seen through Example \ref{ex:res-but-no-radiality} and \ref{ex1} the nilpotency index, the resolvent index and the radiality index do not have to coincide. This is different in finite dimensions as we will see next.

    \begin{prop}\label{nilp>=rad+1}
        Let $\X$ and $\Z$ be finite-dimensional. Then $p_{\rm nilp}^{(E,A)}=p_{\rm rad}^{(E,A)}+1=\pres^{(E,A)}$.
    \end{prop}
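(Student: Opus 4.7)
The plan is to exploit the fact that in finite dimensions every regular pencil $(E,A)$ admits a Weierstraß form, and then chain the already-established inequalities with a single missing bound. Concretely, combining the proposition relating radiality and resolvent indices with parts (\ref{res>=nilp}) and (\ref{rad+1>=nilp}) of Proposition \ref{prop:nilp}, I already have, whenever the three indices exist,
\[
p_{\rm rad}^{(E,A)} + 1 \;\geq\; \pres^{(E,A)} \;\geq\; p_{\rm nilp}^{(E,A)} \qquad\text{and}\qquad p_{\rm rad}^{(E,A)} + 1 \;\geq\; p_{\rm nilp}^{(E,A)}.
\]
Thus everything reduces to the converse estimate $p_{\rm rad}^{(E,A)} + 1 \leq p_{\rm nilp}^{(E,A)}$; together with the chain above this forces all three to coincide.

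To prove this last inequality, I would use Propositions \ref{res-equiv} and \ref{rad-equiv} to replace $(E,A)$ by its Weierstraß form, so that without loss of generality
\[
E = \begin{bmatrix}I & 0 \\ 0 & N\end{bmatrix}, \qquad A = \begin{bmatrix}J & 0 \\ 0 & I\end{bmatrix},
\]
with $J$ a matrix and $N$ nilpotent of degree $p_{\rm nilp}^{(E,A)}$. A direct computation then yields the block-diagonal expressions
\[
(\lambda E - A)^{-1} E = \begin{bmatrix}(\lambda I - J)^{-1} & 0 \\ 0 & (\lambda N - I)^{-1} N \end{bmatrix}, \qquad E(\lambda E - A)^{-1} = \begin{bmatrix}(\lambda I - J)^{-1} & 0 \\ 0 & N(\lambda N - I)^{-1}\end{bmatrix}.
\]
Setting $p := p_{\rm nilp}^{(E,A)} - 1$, I would verify the two radiality inequalities in \eqref{rad-index} separately on each block.

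For the nilpotent block, $(\lambda N - I)^{-1} = -\sum_{k=0}^{p_{\rm nilp}-1}(\lambda N)^k$ is a polynomial in $N$, and therefore all $p+1$ factors commute with each other and with $N$. Rearranging,
\[
\prod_{j=0}^{p} (\lambda_j N - I)^{-1} N \;=\; N^{p+1} \prod_{j=0}^{p} (\lambda_j N - I)^{-1} \;=\; 0,
\]
since $p+1 = p_{\rm nilp}^{(E,A)}$; the product involving $N(\lambda_j N - I)^{-1}$ vanishes for exactly the same reason. For the $J$-block, a standard Neumann-series estimate (or equivalently Hille–Yosida applied to the generator $J$ in finite dimensions) furnishes $\omega, C > 0$ with $(\omega, \infty) \subseteq \rho(J) \subseteq \rho(E,A)$ and $\|(\lambda I - J)^{-1}\| \leq C/(\lambda - \omega)$ for every $\lambda > \omega$; submultiplicativity then gives
\[
\Big\| \prod_{j=0}^{p}(\lambda_j I - J)^{-1} \Big\| \;\leq\; \prod_{j=0}^{p} \frac{C}{\lambda_j - \omega}.
\]
Combining both blocks yields the radiality bound \eqref{rad-index} with $p_{\rm rad}^{(E,A)}$ replaced by $p_{\rm nilp}^{(E,A)} - 1$, hence $p_{\rm rad}^{(E,A)} + 1 \leq p_{\rm nilp}^{(E,A)}$, closing the cycle.

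The step I would expect to require the most care is the matrix resolvent estimate on the $J$-block, in particular making sure the constants $C$ and $\omega$ can be chosen uniformly so that the product inequality holds with a single $C$; this is however entirely standard in finite dimensions via Jordan decomposition of $J$, so no serious obstacle is anticipated. Everything else is a direct calculation in the Weierstraß form.
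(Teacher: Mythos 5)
Your proposal is correct, and the decisive computation --- passing to the Weierstra\ss{} form, writing $(\lambda E-A)^{-1}E$ and $E(\lambda E-A)^{-1}$ block-diagonally, killing the nilpotent block by commutativity and $N^{p_{\rm nilp}}=0$, and bounding the $J$-block by a Jordan/Hille--Yosida resolvent estimate --- is exactly the paper's argument for $p_{\rm rad}^{(E,A)}+1\le p_{\rm nilp}^{(E,A)}$. Where you genuinely differ is in how the resolvent index is brought into the picture: the paper simply cites an external result (\cite[Lem.~2.1]{GernandHallerReis}) for $p_{\rm nilp}^{(E,A)}=\pres^{(E,A)}$ and then only needs the radiality--nilpotency comparison, whereas you close the loop internally via the sandwich $p_{\rm nilp}^{(E,A)}\ge p_{\rm rad}^{(E,A)}+1\ge \pres^{(E,A)}\ge p_{\rm nilp}^{(E,A)}$, using Proposition~\ref{prop:nilp}~(\ref{res>=nilp}) and the proposition relating radiality and resolvent indices. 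Your route is more self-contained (no external citation) and has the additional virtue of automatically establishing \emph{existence} of the radiality and resolvent indices from existence of the Weierstra\ss{} form, which in finite dimensions follows from regularity; the paper's route is shorter on the page but leans on the cited lemma. One cosmetic point: the invariance results you should invoke to pass to the Weierstra\ss{} form are Propositions~\ref{rad-equiv} and~\ref{nilp-unique} (radiality and nilpotency), not \ref{res-equiv}; this does not affect the argument.
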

    \begin{proof}
        In \cite[Lem.~2.1]{GernandHallerReis} it is shown that $p_{\rm nilp}^{(E,A)}=\pres^{(E,A)}$. Thus it remains to show $p_{\rm nilp}^{(E,A)}=p_{\rm rad}^{(E,A)}+1$.

        By Proposition \ref{prop:nilp} (\ref{rad+1>=nilp}) we already have $p_{\rm rad}^{(E,A)}+1\geq p_{\rm nilp}^{(E,A)}$. Let $\left(\big[\begin{smallmatrix}I & 0\\ 0 & N\end{smallmatrix}\big], \big[\begin{smallmatrix}J & 0\\0&I\end{smallmatrix}\big]\right)$ be the Weierstraß form of $(E,A)$. For $\lambda \in \rho(E,A)$ the right-resolvent of this form is
            \begin{equation}\label{right-resolvent}
                R_\lambda\coloneqq \left(\lambda \begin{bmatrix}I&0\\0&N\end{bmatrix} - \begin{bmatrix}J&0\\0&I\end{bmatrix}\right)\inv\begin{bmatrix}I&0\\0&N\end{bmatrix} = \begin{bmatrix}(\lambda I-J)\inv & 0\\ 0 & (\lambda N - I)\inv N\end{bmatrix}
            \end{equation}
        and since $N$ commutes with $(\lambda N-I)\inv=-\sum_{k=0}^{p_{\rm nilp}^{(E,A)}-1}(\lambda N)^k$ this is the same as the left-resolvent. Let $\lambda_0,\ldots,\lambda_{p_{\rm nilp}^{(E,A)}-1}\in\rho(E,A)$. Using the nilpotency of $N$ we get
            \begin{equation}
                (\lambda_0 N -I)\inv N\cdot\ldots\cdot(\lambda_{p_{\rm nilp}^{(E,A)}-1} N-I)\inv N = (\lambda_0 N-I)\inv \cdot\ldots\cdot (\lambda_{p_{\rm nilp}^{(E,A)}-1} N-I)\inv N^{p_{\rm nilp}^{(E,A)}}=0.
            \end{equation}
        Thus,
            \begin{equation*}
                R_{\lambda_0}\cdot\ldots\cdot R_{\lambda_{p_{\rm nilp}^{(E,A)}-1}}=\begin{bmatrix}\displaystyle\prod_{k=0}^{p_{\rm nilp}^{(E,A)}-1}(\lambda_k I-J)\inv & 0\\ 0 & 0\end{bmatrix}.
            \end{equation*}
        Since $J$ is in Jordan form there exists a $C>0$, such that for all $\lambda_0,\ldots,\lambda_{p_{\rm nilp}^{(E,A)}-1}>\omega$ we have
            \begin{equation}
                \left\Vert(\lambda_0 I -J)\inv\cdot\ldots\cdot(\lambda_{p_{\rm nilp}^{(E,A)}-1}I-J)\inv\right\Vert \leq K \frac{1}{(\lambda_0-\omega)\cdot\ldots\cdot(\lambda_{p_{\rm nilp}^{(E,A)}-1}-\omega)}.
            \end{equation}
        Thus, $(E,A)$ has at most radiality index $p_{\rm nilp}^{(E,A)}-1$ and therefore $p_{\rm nilp}^{(E,A)}\geq p_{\rm rad}^{(E,A)}+1$.
    \end{proof}

\section{Differentiation index}
The differentiation index is based on taking formal derivatives up to a~certain order which leads to an ordinary differential equation. For details, see, for instance, \cite[Sec.~3.10]{LamourMaerzTischendorf2013} and \cite[Sec.~3.3]{Kunkelmehrmann}. Here we present a~generalization of this concept to abstract differential-algebraic equations of the form
\eqref{eqn:dae}
%
with the restriction  that only the case where $f\equiv0$ is considered. An
extension to nonzero $f$ is a  subject for future research.
Using the fact that  the operators $E$ and $A$ do not depend on time, the $k$-th formal derivative of \eqref{eqn:dae} with $f=0$ is
\[\frac{{\rm d}}{{\rm d}t}E\frac{d^k}{dt^k}x(t)=A\frac{d^k}{dt^k}x(t).\]
The collection of the first $\mu$ formal derivatives of \eqref{eqn:dae} is
\begin{equation}\label{eq:derarray}
\underbrace{\begin{bmatrix}
A&-E\\
&A&-E\\
&&\ddots&\ddots\\
&&&A&-E
\end{bmatrix}}_{\eqqcolon M_\mu}
\begin{pmatrix}
    x(t)\\\frac{{\rm d}}{{\rm d}t}x(t)\\\vdots\\\frac{d^{\mu}}{dt^{\mu}}x(t)\\\frac{d^{\mu+1}}{dt^{\mu+1}}x(t)
\end{pmatrix}
=0
\end{equation}
which will be referred to as a {\em derivative array of order
$\mu$}. The operator $M_\mu$ maps from $\dom(A)^{\mu+1}
\times\X$ to $\Z^{\mu+1}$.

\begin{defi}[\textbf{Differentiation index}]\label{eq:diffind}\hfill\\
Let $\X,\Z$ be Banach spaces, and let a~linear differential-algebraic equation
$\frac{{\rm d}}{{\rm d}t}Ex(t)=Ax(t)$ be given, where $E\in\L(\X,\Z)$,
$(A,\dom(A))$ is a closed and densely defined linear
operator from $\X$ to $\Z$. 
The DAE $\frac{{\rm d}}{{\rm d}t}Ex(t)=Ax(t)$ has differentiation index $p_{\rm diff}^{(E,A)}\in\N_0$, if the following holds:
\begin{enumerate}[(a)]
\item There exists a~Banach space $\tilde{\X}$, such that $\tilde{\X}$ is densely embedded into $\X$, an operator $(S,\dom(S))$
which is the generator of a~strongly continuous semigroup on $\tilde{\X}$,
and the operator $M_{p_{\rm diff}^{(E,A)}}$ as defined in \eqref{eq:derarray} satisfies
\[
\begin{pmatrix}
x_0\\x_1\\\vdots\\x_{p_{\rm diff}+1}\end{pmatrix}\in \ker M_{p_{\rm diff}^{(E,A)}}\quad
\Longrightarrow\,x_0\in\dom(S)\text{ with } x_1=Sx_0.
\]
\item The number $p_{\rm diff}^{(E,A)}$ is minimal with the properties in (a).
\end{enumerate}
We call
\begin{equation}\label{eq:complode}
    \frac{{\rm d}}{{\rm d}t}x(t)=Sx(t)
\end{equation}
an {\em abstract completion ODE
of \eqref{eqn:dae}}.
\end{defi}
Before establishing a connection between the solutions of the abstract completion ODE and the DAE itself, we introduce a lemma that characterizes solutions.
This result is a~direct consequence of the integration by parts formula. It is therefore omitted.
\begin{lem}\label{lem:test}
Let $\X,\Z$ be Banach spaces, and let a~linear differential-algebraic equation
$\frac{{\rm d}}{{\rm d}t}Ex(t)=Ax(t)$ be given, where $E\in \L(\X,\Z)$,
$(A,\dom(A))$ is a closed and densely defined linear
operator from $\X$ to $\Z$.
\begin{enumerate}[(a)]
    \item If $x\colon[0,\infty)\to X$ solves $\frac{{\rm d}}{{\rm d}t}Ex(t)=Ax(t)$, then
for all test functions $\varphi\colon[0,\infty)\to\R$ (i.e., those which are smooth and have compact support in $(0,\infty)$),
\begin{equation}-\int_{\R_\geq 0} \frac{{\rm d}}{{\rm d}t}\varphi(t)Ex(t)dt=\int_{\R_\geq 0} \varphi(t)Ax(t)dt.\label{eq:test}\end{equation}
\item Conversely, if $x\colon [0,\infty)\to \R$ fulfills \eqref{eq:test} for all test functions $\varphi\colon[0,\infty)\to\R$, and it is continuously differentiable with $x(t)\in\dom(A)$ for all $t\ge0$, then it is a solution of $\frac{{\rm d}}{{\rm d}t}Ex(t)=Ax(t)$.
    \end{enumerate}
\end{lem}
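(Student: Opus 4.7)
The plan is to invoke the Bochner-integral integration-by-parts formula, once forward for part (a) and once in reverse for part (b), using the continuity and differentiability hypotheses to justify it, and then to use the fundamental lemma of the calculus of variations (in a vector-valued form) to return from a weak identity to a pointwise one.

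For part (a), I would simply start from the pointwise identity $\tfrac{d}{dt}Ex(t)=Ax(t)$, which holds by the definition of a classical solution, pair it with an arbitrary test function $\varphi$ compactly supported in $(0,\infty)$, integrate over $[0,\infty)$, and apply integration by parts. The boundary contributions vanish because $\varphi$ vanishes near $0$ and eventually, so \eqref{eq:test} drops out at once. The only thing to verify is the standard Bochner integration-by-parts formula, which is available because $Ex(\cdot)$ is $C^1$ into $\Z$ and $\varphi$ is smooth and scalar-valued.

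For part (b), the hypothesis that $x\colon[0,\infty)\to\X$ is $C^1$ and that $E\in\L(\X,\Z)$ together give $Ex(\cdot)\in C^1([0,\infty);\Z)$ with $\tfrac{d}{dt}Ex(t)=E\dot x(t)$. Reversing the integration by parts on the left-hand side of \eqref{eq:test} and substituting the hypothesis yields
\[\int_{\R_{\ge 0}}\varphi(t)\bigl[\tfrac{d}{dt}Ex(t)-Ax(t)\bigr]\,dt=0\]
for every test function $\varphi$. Pairing against an arbitrary $\ell\in\Z^*$ reduces this to a scalar identity, to which the du~Bois-Reymond lemma applies and gives $\ell\bigl(\tfrac{d}{dt}Ex(t)-Ax(t)\bigr)=0$ for almost every $t$; Hahn-Banach then drops $\ell$, so $\tfrac{d}{dt}Ex(t)=Ax(t)$ for a.e.\ $t\ge 0$.

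The main obstacle is lifting this almost-everywhere identity to every $t\ge 0$, as required by the classical-solution notion. My plan is to exploit closedness of $A$: fix $t_0\ge 0$ and choose a sequence $t_n\to t_0$ along which the identity holds (possible since its complement is a null set). Continuity of $x$ into $\X$ gives $x(t_n)\to x(t_0)$, and continuity of $E\dot x$ into $\Z$ gives $Ax(t_n)=\tfrac{d}{dt}Ex(t_n)=E\dot x(t_n)\to E\dot x(t_0)$. Since each $x(t_n)\in\dom(A)$, closedness of $A$ then forces $Ax(t_0)=E\dot x(t_0)=\tfrac{d}{dt}Ex(t_0)$, so the DAE is satisfied at every $t$. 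Modulo this short regularity bootstrap, the entire lemma is indeed, as the authors indicate, a direct application of integration by parts.
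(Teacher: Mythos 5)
Your proof is correct and is precisely the integration-by-parts argument the paper has in mind — the paper in fact omits the proof entirely, stating only that the lemma is a direct consequence of integration by parts, so your write-up supplies exactly the missing details, and the closedness-of-$A$ bootstrap is the right way to upgrade the almost-everywhere identity to every $t\ge 0$. The only point worth tightening is the passage from $\ell\bigl(\tfrac{d}{dt}Ex(t)-Ax(t)\bigr)=0$ a.e.\ for each $\ell\in\Z^*$ to the vector identity a.e.: the exceptional null set depends on $\ell$, so in a nonseparable $\Z$ you should instead invoke a vector-valued du Bois--Reymond lemma directly (e.g.\ by mollification of the locally Bochner-integrable function $E\dot x-Ax$), after which your argument goes through unchanged.
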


\begin{thm}
Let $\X,\Z$ be Banach spaces, and let $E\in \L(\X,\Z)$, and
$(A,\dom(A))$ be a closed and densely defined linear
operator from $\X$ to $\Z$. 
Assume that the linear differential-algebraic equation $\frac{{\rm d}}{{\rm d}t}Ex(t)=Ax(t)$ has differentiation index $p_{\rm diff}^{(E,A)}$.
If $x$ solves $\frac{{\rm d}}{{\rm d}t}Ex(t)=Ax(t)$, then it fulfills the abstract completion ODE \eqref{eq:complode}.
\end{thm}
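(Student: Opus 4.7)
The plan is to combine the weak/test-function characterisation from Lemma \ref{lem:test} with the kernel condition defining the differentiation index. Given a solution $x$ of $\tfrac{\rm d}{{\rm d}t}Ex=Ax$, Lemma \ref{lem:test}(a) yields $-\int_0^\infty \varphi'(t)Ex(t)\,dt=\int_0^\infty \varphi(t)Ax(t)\,dt$ for every test function $\varphi$ with compact support in $(0,\infty)$. My plan is to leverage this identity iteratively to build a tuple lying in $\ker M_{p_{\rm diff}^{(E,A)}}$.

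For such a $\varphi$ I would define $Y_k:=(-1)^k\int_0^\infty \varphi^{(k)}(t)x(t)\,dt\in\X$ for $k=0,\ldots,p_{\rm diff}^{(E,A)}+1$, and verify that $(Y_0,\ldots,Y_{p_{\rm diff}^{(E,A)}+1})\in\ker M_{p_{\rm diff}^{(E,A)}}$. Since $A$ is closed, $E$ is bounded, $x(t)\in\dom(A)$, and $Ax(t)=\tfrac{\rm d}{{\rm d}t}Ex(t)$ is continuous, Hille's theorem on closed operators combined with integration by parts (using that $\varphi^{(k)}$ has compact support in $(0,\infty)$) gives $Y_k\in\dom(A)$ and
\[AY_k = (-1)^k\!\int\! \varphi^{(k)}Ax\,dt = (-1)^{k+1}\!\int\! \varphi^{(k+1)}Ex\,dt = E(-1)^{k+1}\!\int\! \varphi^{(k+1)}x\,dt = EY_{k+1}.\]
Applying the defining property of the differentiation index yields $Y_0\in\dom(S)$ and $SY_0=Y_1$, that is,
\[\int_0^\infty \varphi(t)x(t)\,dt\in\dom(S),\qquad S\!\int_0^\infty \varphi(t)x(t)\,dt=-\!\int_0^\infty \varphi'(t)x(t)\,dt\]
for every test function $\varphi$.

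To upgrade this to a pointwise statement, I would mollify: choose a standard mollifier $\rho_\eps\in C_c^\infty(\R)$ supported in $(-\eps,\eps)$ and, for $t>\eps$, take $\varphi(s):=\rho_\eps(t-s)$ in the identity above. This shows that $x_\eps(t):=\int \rho_\eps(t-s)x(s)\,ds$ lies in $\dom(S)$ and, via differentiation under the integral, satisfies $\tfrac{\rm d}{{\rm d}t}x_\eps(t)=Sx_\eps(t)$. Hence $x_\eps$ is a classical solution of the abstract completion ODE on $(\eps,\infty)$, so $x_\eps(t)=T(t-s)x_\eps(s)$ for the $C_0$-semigroup $T$ generated by $S$. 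Passing $\eps\to 0^+$ and using continuity of $x$ into $\X$ together with the embedding $\tilde\X\hookrightarrow\X$ yields $x(t)=T(t-s)x(s)$, i.e.\ $x$ fulfills \eqref{eq:complode}.

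The main obstacle is this last limit: convergence $x_\eps\to x$ is a priori only in $\X$, whereas $T(\cdot)$ acts on $\tilde\X$, so a closedness / graph-norm argument for $S$ is needed to conclude that $x(s)\in\tilde\X$ and that the semigroup identity survives the limit. A safer alternative, which the authors may well adopt, is to interpret ``$x$ fulfills \eqref{eq:complode}'' in the distributional sense already established in the second paragraph, in which case the mollification step is superfluous.
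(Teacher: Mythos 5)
Your core argument is exactly the paper's: test the weak identity of Lemma~\ref{lem:test}(a) with $(-1)^k\varphi^{(k)}$, observe that the resulting vectors $Y_k=(-1)^k\int\varphi^{(k)}(t)x(t)\,dt$ form an element of $\ker M_{p_{\rm diff}^{(E,A)}}$, and invoke the defining property of the differentiation index to get $Y_0\in\dom(S)$ and $SY_0=Y_1$. Where you diverge is the final step. The paper simply rewrites $SY_0=Y_1$ as the weak identity $-\int\varphi'(t)x(t)\,dt=\int\varphi(t)Sx(t)\,dt$ and re-applies Lemma~\ref{lem:test}(b) with $(I,S)$ in place of $(E,A)$; you instead mollify to produce classical solutions $x_\eps$ of \eqref{eq:complode} and try to pass to the limit. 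Both routes face the same unverified regularity issue, which you honestly flag and the paper silently elides: to move $S$ inside the integral (paper) or to let $\eps\to0^+$ and keep the semigroup identity (you), one needs $x(t)\in\dom(S)$, respectively $x(t)\in\tilde\X$, with suitable continuity --- precisely the hypotheses of Lemma~\ref{lem:test}(b), which are not established from the DAE alone. Your suggested ``safer alternative'' of reading the conclusion distributionally is in effect what the paper's appeal to Lemma~\ref{lem:test} amounts to, so your proposal is correct at the same level of rigour as the published argument, and your closing remark correctly identifies the one genuine gap common to both.
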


\begin{proof}
Assume that $x$ solves $\frac{{\rm d}}{{\rm d}t}Ex(t)=Ax(t)$. Lemma~\ref{lem:test} implies that for all test functions $\varphi\colon [0,\infty)\to\R$
\[-\int_{\R_\geq 0} \frac{{\rm d}}{{\rm d}t}\varphi(t)Ex(t)dt=\int_{\R_\geq 0} \varphi(t)Ax(t)dt.\]
Now testing with $(-1)^k\frac{d^k}{dt^k}\varphi(t)$, $k=0,\ldots,p_{\rm diff}^{(E,A)}+1$, we obtain that the vectors
\begin{align*}
    x_k&=(-1)^k\int_{\R_\geq 0}\frac{d^k}{dt^k}\varphi(t)x(t)dt, \quad k=0,\ldots,p_{\rm diff}^{(E,A)}+1,
\end{align*}
satisfy
\[\begin{bmatrix}
A&-E\\
&A&-E\\
&&\ddots&\ddots\\
&&&A&-E
\end{bmatrix}
\begin{pmatrix}
    x_0\\x_1\\\vdots\\x_{p_{\rm diff}^{(E,A)}}\\x_{p_{\rm diff}^{(E,A)}+1}
\end{pmatrix}
=-0,
\]
and the definition of the differentiation index yields
\[x_1=Sx_0.
\]
Thus, for all test functions $\varphi\colon [0,\infty)\to\R$
\begin{equation*}
    -\int_{\R_\geq 0} \tfrac{{\rm d}}{{\rm d}t}\varphi(t)x(t)dt=\int_{\R_\geq 0} \varphi(t)Sx(t)dt.
\end{equation*}
Another application of Lemma~\ref{lem:test}  now shows that $x$ is a~solution of the  abstract completion ODE of \eqref{eqn:dae}.
\end{proof}
 \begin{remark}
 Let $\X,\Z$ be Banach spaces, and let $E\in \L(\X,\Z)$, and
$(A,\dom(A))$ be a closed and densely defined linear
operator from $\X$ to $\Z$. 
Assume that the linear differential-algebraic equation $\frac{{\rm d}}{{\rm d}t}Ex(t)=Ax(t)$ has differentiation index $p_{\rm diff}^{(E,A)}$, and let \eqref{eq:complode}
be an abstract completion ODE.
Let $x_0\in\X$, such that the differential-algebraic initial value problem $\frac{{\rm d}}{{\rm d}t}Ex(t)=Ax(t)$, $Ex_0=Ex(0)$ has a~solution $x$. Since the solution of the abstract completion boundary control system  \eqref{eq:complode}  with initial value $x(0)=x_0\in\X$ is unique, we can conclude that $x$ is the unique solution of the differential-algebraic initial value problem $\frac{{\rm d}}{{\rm d}t}Ex(t)=Ax(t)$.
\end{remark}
\begin{ex} {\bf (A system with $p_{\rm diff}^{(E,A)}=2$)} Once again, consider the system \eqref{eq:DAEex1}--\eqref{eq:DAEex4},
which corresponds to a~differential-algebraic equation $\frac{{\rm d}}{{\rm d}t}Ex(t)=Ax(t)$ with
spaces \eqref{eq:difftransportsp}
and operators as in \eqref{eq:difftransportop}.\\
The system does not have differentiation index zero, since $E$ has a~nontrivial nullspace. To analyze whether it has differentiation index one,
assume that $x_1(0,\cdot)=x_{10}$ for some given $x_{10}\in L^2([0,1])$. Then the solution of the initial-boundary value problem \eqref{eq:DAEex1}, \eqref{eq:DAEex2} reads $x_1\colon [0,\infty)\to L^2([0,1])$ with
\[(x_1(t))(\xi)=x_1(t,\xi)=\begin{cases}
    x_0(\xi-t):&\xi>t,\\
    0:&\xi<t.
\end{cases}\]
By thereafter plugging this into \eqref{eq:DAEex2} and \eqref{eq:DAEex3}, we obtain that
\[x_2(t)=\begin{cases}
    x_0(1-t):&t\leq 1,\\
    0:&t>1,
\end{cases}\]
and \eqref{eq:DAEex4} yields that $x_3(t)=\frac{d}{d t}x_2(t)$. This requires differentiability of $x_2$,
which is fulfilled, if $x_0\in H^3([0,1])$ with $x_0(0)=x_0'(0)=x_0''(0)=0$. In this case,
\[x_3(t)= \begin{cases}
    -\frac{\partial}{\partial \xi}x_{10}(1-t):&t\leq 1,\\
    0:&t>1.
\end{cases}\]
Hence, a~classical solution of \eqref{eq:DAEex1}--\eqref{eq:DAEex4} exists if
\[x_{10}\in H^3([0,1]) 
\text{ with } x_{10}(0)=x_{10}'(0)=x_{10}''(0)=0.\]
Next we analyse the differentiation index $p_{\rm diff}^{(E,A)}\in\N$:
Since
with
\[x_1=\left(\begin{smallmatrix}0\\0\\1\end{smallmatrix}\right),\,x_2=\left(\begin{smallmatrix}0\\1\\0\end{smallmatrix}\right)\in \X,\]
\[\left(\begin{smallmatrix}{0}\\x_{1}\\x_{2}\end{smallmatrix}\right)\in \ker M_1\]
one has $p_{\rm diff}^{(E,A)}>1$.

It will now be shown that  $p_{\rm diff}^{(E,A)}=2 .$
To this end, let $x_k\in \X$, $k=0,1,2,3$, partitioned as
\[x_k=\left(\begin{smallmatrix}x_{k1}\\x_{k2}\\x_{k3}\end{smallmatrix}\right), \quad x_{k1}\in L^2(0,1),\; x_{k2},x_{k3}\in\C,\]
and
\begin{equation}
    \left(\begin{smallmatrix}x_{0}\\x_{1}\\x_{2}\\x_3\end{smallmatrix}\right)\in \ker M_2.\label{eq:x012kerM2}
\end{equation}
Then
\[\tfrac{\partial}{\partial \xi}x_{11}=-x_{21},\quad
\tfrac{\partial}{\partial \xi}x_{21}=-x_{21},\]
and
\[0=\delta_0 x_{11},\quad 0=\delta_0 x_{21}=-\delta_0 \tfrac{\partial}{\partial \xi}x_{11},\]
which implies that
\[x_{11}\in \tilde{\X}_1\coloneqq \setdef{x\in H^2(0,1)}{x(0)=x'(0)=0}.\]
Analogously, we see that
\[x_{01}\in \tilde{\mathcal{Y}}_1\coloneqq \setdef{x\in H^3(0,1)}{x(0)=x'(0)=x''(0)=0}.\]
Further, by
\begin{align*}
x_{11}&=-\tfrac{\partial}{\partial \xi}x_{10},\\
x_{12}&=x_{03},\\
x_{13}&=x_{22}=\delta_1x_{21}=-\delta_1\tfrac{\partial}{\partial \xi}x_{11}=\delta_1\tfrac{\partial^2}{\partial \xi^2}x_{10},
\end{align*}
we have that \eqref{eq:x012kerM2} implies that, for
\[\tilde{\X}\coloneqq \tilde{\X}_1\times \C^2\]
and $(S,\dom(S))$ with
\[\dom(S)=\tilde{\mathcal{Y}}_1\times\C^2,\quad S\begin{pmatrix}x_{01}\\x_{02}\\x_{03}\end{pmatrix}=\begin{pmatrix}-\frac{\partial}{\partial \xi}x_{01}\\x_{03}\\x_{10}''(1)\end{pmatrix}.\]
The operator $S$ is indeed the generator of a~strongly continuous semigroup on $\tilde{\X}$, namely $T(\cdot)$ with
\[T(t)\begin{pmatrix}x_{01}\\x_{02}\\x_{03}\end{pmatrix}=\begin{pmatrix}\mathbb{S}_t x_{01}\\x_{02}+tx_{03}+\int_0^t\int_0^\tau\big(\mathbb{S}_s x_{10}\big)(1)dsd\tau \\x_{03}+\int_0^t\big(\mathbb{S}_\tau x_{10}\big)(1)d\tau \end{pmatrix},\]
where $\mathbb{S}_t$ denotes the right shift of length $t$ on functions defined on the interval $[0,1]$.\\
In the case where the initial condition fulfills
\[x_{02}=x_{01}(1),\quad x_{03}=-x_{01}'(1),\]
it can be seen that
\[x(t)=T(t)\begin{pmatrix}x_{01}\\x_{02}\\x_{03}\end{pmatrix}\]
is indeed a~solution of \eqref{eq:DAEex1}-\eqref{eq:DAEex4}.
\end{ex}

\begin{prop}
\begin{enumerate}[(a)]
\item If the differentiation index $p_{\rm diff}^{(E,A)}$ exists, then the chain index exists  and $p_{\rm chain}^{(E,A)}\le p_{\rm diff}^{(E,A)}$.
\item If the nilpotency index $p_{\rm nilp}^{(E,A)}$ exists and the operator $A_1$ in the Weierstra\ss\ form generates a $C_0-$semigroup, then the differentiation index exists and $p_{\rm nilp}^{(E,A)}= p_{\rm diff}^{(E,A)}$.
\end{enumerate}
\end{prop}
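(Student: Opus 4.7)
For Part (a), I plan a direct contradiction argument. Assume a chain $(z_1,\ldots,z_{p_{\rm diff}^{(E,A)}+1})$ of length $p_{\rm diff}^{(E,A)}+1$ exists. I prepend a zero and consider the tuple $(0,z_1,z_2,\ldots,z_{p_{\rm diff}^{(E,A)}+1})\in\dom(A)^{p_{\rm diff}^{(E,A)}+1}\times\X$; the chain relations $Ez_1=0$ and $Ez_{k+1}=Az_k$ translate precisely into the block equations defining $\ker M_{p_{\rm diff}^{(E,A)}}$. The implication from Definition~\ref{eq:diffind} then forces $z_1=S\cdot 0=0$, contradicting $z_1\in\ker E\setminus\{0\}$. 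Thus no chain of length exceeding $p_{\rm diff}^{(E,A)}$ can exist, so the chain index is finite and satisfies $p_{\rm chain}^{(E,A)}\le p_{\rm diff}^{(E,A)}$.

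For Part (b), I would construct a concrete completion ODE from the Weierstra\ss\ form and then invoke Part (a) for the matching lower bound. Writing $p=p_{\rm nilp}^{(E,A)}$ and working in the decoupled form $(\tilde E,\tilde A)=\bigl(\bigl[\begin{smallmatrix}I_{\Y^1}&0\\0&N\end{smallmatrix}\bigr],\bigl[\begin{smallmatrix}A_1&0\\0&I_{\Y^2}\end{smallmatrix}\bigr]\bigr)$, the derivative-array equations for $(x_0,\ldots,x_{p+1})\in\ker M_p$ decouple in block form into $x_{k+1,1}=A_1 x_{k,1}$ and $Nx_{k+1,2}=x_{k,2}$. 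Iterating the second yields $x_{k,2}=N^{p+1-k}x_{p+1,2}$, so $x_{0,2}=x_{1,2}=0$ because $N^p=0$; iterating the first gives $x_{k,1}=A_1^k x_{0,1}$, and in particular $x_{0,1}\in\dom(A_1)$. I would then set $\tilde{\X}\coloneqq\Y^1\oplus\Y^2$ and $\tilde S\coloneqq\bigl[\begin{smallmatrix}A_1&0\\0&0\end{smallmatrix}\bigr]$ with $\dom(\tilde S)=\dom(A_1)\oplus\Y^2$. Since $A_1$ generates a $C_0$-semigroup $T_1(t)$ by assumption, the block-diagonal family $\bigl[\begin{smallmatrix}T_1(t)&0\\0&I_{\Y^2}\end{smallmatrix}\bigr]$ is a $C_0$-semigroup on $\tilde{\X}$ whose generator is exactly $\tilde S$. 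The kernel computation above then yields $\tilde Sx_0=(A_1 x_{0,1},0)=(x_{1,1},x_{1,2})=x_1$, verifying the implication in Definition~\ref{eq:diffind} for $\mu=p$. Transporting $\tilde S$ along the isomorphism of Definition~\ref{def:equiv} produces the required $S$ on $\X$, whence $p_{\rm diff}^{(E,A)}\le p_{\rm nilp}^{(E,A)}$.

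The matching lower bound follows by combining Part (a) with Proposition~\ref{prop:nilp}\,(\ref{chain+1>=nilp}): $p_{\rm nilp}^{(E,A)}=p_{\rm chain}^{(E,A)}\le p_{\rm diff}^{(E,A)}$, which closes the equality. The main obstacle I foresee is the bookkeeping tied to the minimality clause of Definition~\ref{eq:diffind}: one must check that no \emph{smaller} choice of $\mu$ can admit \emph{any} $(\tilde{\X},S)$ fulfilling the implication, and the cleanest way to rule this out is exactly the chain obstruction from Part (a) applied to a chain of length $p_{\rm nilp}^{(E,A)}$ arising via the Weierstra\ss\ form. A secondary subtlety is ensuring that pulling $\tilde S$ back along $P$ preserves both the dense embedding $\tilde{\X}\hookrightarrow \X$ and the generation of a $C_0$-semigroup, but this is a routine consequence of $P$ being a bounded isomorphism.
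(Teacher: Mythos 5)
Your proof is correct and follows essentially the same route as the paper: Part (a) by prepending a zero to a chain to produce an element of $\ker M_{p_{\rm diff}^{(E,A)}}$ with $x_0=0$, $x_1\neq 0$ that defeats any candidate $S$, and Part (b) by the block operator $S=\bigl[\begin{smallmatrix}A_1&0\\0&0\end{smallmatrix}\bigr]$ with domain $\dom(A_1)\times\Y^2$ on the Weierstra\ss{} form. The only cosmetic difference is that for the lower bound $p_{\rm diff}^{(E,A)}\ge p_{\rm nilp}^{(E,A)}$ you invoke Part (a) together with $p_{\rm nilp}^{(E,A)}=p_{\rm chain}^{(E,A)}$, whereas the paper directly exhibits, for each $k<p_{\rm nilp}^{(E,A)}$, the kernel element built from some $z$ with $N^{p_{\rm nilp}^{(E,A)}-1}z\neq 0$ --- which is precisely the chain underlying your argument.
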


\begin{proof}
\begin{enumerate}[(a)]
\item   Assume that the  differentiation index exists. Further, let $(x_1,\ldots,x_p)$, $p\in\N$, be a chain of $(E,A)$. Then
    \[M_{p-1}\left(\begin{smallmatrix}0\\x_1\\\vdots\\x_p\end{smallmatrix}\right)=0.\]
    This implies that there does not exist a~mapping $S\colon \dom(S)\to \X$ such that every $z\in \ker M_{p-1}$ satisfies
    \[z=\left(\begin{smallmatrix}x_0\\Sx_0\\x_2\\\vdots\\x_p\end{smallmatrix}\right)\]
for some $x_0\in\X$, $x_2,\ldots,x_p\in \dom(A)$.
Consequently, $p\leq p_{\rm diff}^{(E,A)};$ that is, the
chain index of $(E,A)$ does not exceed the
differentiation index of $(E,A)$.
\item Assume that $(E,A)$ is in Weierstra\ss\ form \eqref{eqn:form-inf} with $\nu = p_{\rm nilp}^{(E,A)}$ and $N^\nu=0$, $N^{\nu-1}\ne0$.
Let $x_k\in \X= \Y^1\times\Y^2$, $k\in\N$, be partitioned as
\[x_k=\left(\begin{smallmatrix}x_{k,1}\\x_{k,2}\end{smallmatrix}\right), \quad x_{k,1}\in \Y^1,\; x_{k,2}\in\Y^2.\]
Using the Weierstra\ss\ form, it follows that
\begin{equation}\left(\begin{smallmatrix}x_{0}\\\vdots\\x_{k+1}\end{smallmatrix}\right)\in \ker M_k
\label{eq:Mkker}\end{equation}
if, and only if, for some $x_{1,0}\in\dom(A_1^{k+1})$, $x_{2,0}\in\Y^2$,
\begin{equation}x_{1,i}=A_1^i x_{1,0},\quad
N^{i}x_{2,i}=x_{2,0},\quad i=1,\ldots,k+1.\label{eq:Mkker2}
\end{equation}
Let $k<\nu$. Then by choosing some $z\in\Y^2$ with $N^{\nu-1}z\neq0$, we obtain \eqref{eq:Mkker} with
\[x_0=\left(\begin{smallmatrix}0\\0\end{smallmatrix}\right),\, x_1=\left(\begin{smallmatrix}0\\N^{\nu-1}z\end{smallmatrix}\right),\ldots, \, x_{k+1}=\left(\begin{smallmatrix}0\\N^{\nu-k-1}z\end{smallmatrix}\right).\]
Since $x_0=0$ and $x_1\neq0$, an operator $S$ with the properties as described in Definition~\ref{eq:diffind} cannot exist.

On the other hand, the equivalence between \eqref{eq:Mkker} and \eqref{eq:Mkker2} implies that  if $k=\nu$
\[x_1=\left(\begin{smallmatrix}A_1x_{0,1}\\0\end{smallmatrix}\right).\]
Consequently, we can choose $\tilde{\X}=\X$
and $S\colon \dom(S)\subset\X\to\X$ with $\dom(S)=\dom(A_1)\times \Y^2$ and
\[S=\begin{bmatrix}A_1&0\\0&0\end{bmatrix},\]
which is the generator of the strongly continuous semigroup
\[T(\cdot)=\begin{bmatrix}T_1(\cdot)&0\\0&I_{\Y^2}\end{bmatrix},\]
where $T_1(\cdot)$ is the semigroup generated by $A_1$. This shows that the differentiation index coincides with the nilpotency index of $N$.\qedhere
\end{enumerate}
\end{proof}

\section{Perturbation index}

Finally, we define the  perturbation index for DAEs on possibly infinite-dimensional spaces. As the name indicates, the  perturbation index is a measure of sensitivity of solutions with respect to perturbations of the  problem.

    \begin{defi}[\textbf{perturbation index}]\hfill\\
        Consider the DAE \eqref{eqn:dae} on $[0,T]$ with input $\delta\in C^{p_{\rm pert}^{(E,A)}-1}([0,T],\Z)$  and  solution $x:$
         \begin{align}\label{eqn:dae.pert}
                \frac{\text{d}}{\text{d}t} Ex(t)=Ax(t)+\delta(t), \quad t\geq 0.
            \end{align}
             The  \textit{perturbation index $p_{\rm pert}^{(E,A)}$} is the smallest number $p\in\N$ such that there exists a constant $c>0$ satisfying the following for all $\delta:$
            \begin{align}
              \max_{0\le t\le T} \|x(t)\|_{\X} &\le c\left( \|x(0)\|_{\X} + \sum_{i=0}^{p-1}
\max_{0\le t\le T}  \left\|\frac{\text{d}^{i}}{\text{d}t^{i}}\delta(t)\right\|_{\Z}
                    \right),&&\text{if }p>0, \\                \max_{0\le t\le T} \|x(t)\|_{\X} &\le c\left( \|x(0)\|_{\X} +\int_0^T\|\delta(t)\|_{\Z}{\rm d}t  \right)        ,&&\text{if }p=0.\label{p=0}
            \end{align}

    \end{defi}

First we show that the perturbation index is identical for equivalent systems.
\begin{prop}\label{equ=pert}
Consider  two differential-algebraic systems $\frac{{\rm d}}{{\rm d}t}Ex=Ax$, $\frac{{\rm d}}{{\rm d}t}\tilde E\tilde x=\tilde A\tilde x$ with $(E, A) \sim (\tilde E, \tilde A)$. Then
\[p_{\rm pert}^{(E,A)} =p_{\rm pert}^{(\tilde E, \tilde A)}.\]
\end{prop}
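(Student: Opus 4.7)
The plan is to mimic the approach used in the earlier equivalence propositions (Propositions \ref{res-equiv}, \ref{chain-equiv}, \ref{rad-equiv}) by using the isomorphisms $P\colon\X\to\tilde\X$ and $Q\colon\Z\to\tilde\Z$ with $E=Q^{-1}\tilde E P$ and $A=Q^{-1}\tilde A P$ to transport solutions and perturbations between the two systems, then exploit boundedness of $P,P^{-1},Q,Q^{-1}$ to transfer the perturbation estimate.

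The key computational step is this: if $x\colon[0,T]\to\X$ solves $\frac{{\rm d}}{{\rm d}t}Ex(t)=Ax(t)+\delta(t)$, then $\tilde x\coloneqq Px$ satisfies
\[
\tfrac{{\rm d}}{{\rm d}t}\tilde E\tilde x(t)=\tfrac{{\rm d}}{{\rm d}t}\tilde E P x(t) = Q\tfrac{{\rm d}}{{\rm d}t}Ex(t)=QAx(t)+Q\delta(t)=\tilde A\tilde x(t)+\tilde\delta(t),
\]
where $\tilde\delta\coloneqq Q\delta$. Since $Q$ is an isomorphism, $\tilde\delta\in C^{p-1}([0,T],\tilde\Z)$ iff $\delta\in C^{p-1}([0,T],\Z)$, and conversely every perturbation of the tilded system arises this way via $\delta=Q^{-1}\tilde\delta$. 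Thus the families of perturbation-solution pairs correspond bijectively.

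Assuming the perturbation index $p\coloneqq p_{\rm pert}^{(\tilde E,\tilde A)}$ exists with constant $\tilde c$, we apply the estimate to $(\tilde x,\tilde\delta)$ and use
\[
\|x(t)\|_\X\le\|P^{-1}\|\,\|\tilde x(t)\|_{\tilde\X},\qquad \|\tilde x(0)\|_{\tilde\X}\le\|P\|\,\|x(0)\|_\X,\qquad \bigl\|\tfrac{{\rm d}^i}{{\rm d}t^i}\tilde\delta(t)\bigr\|_{\tilde\Z}\le\|Q\|\,\bigl\|\tfrac{{\rm d}^i}{{\rm d}t^i}\delta(t)\bigr\|_\Z
\]
to derive the corresponding estimate for $(E,A)$ with constant $c\coloneqq \tilde c\,\|P^{-1}\|\max\{\|P\|,\|Q\|\}$. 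Hence $p_{\rm pert}^{(E,A)}\le p_{\rm pert}^{(\tilde E,\tilde A)}$. Swapping the roles of the two systems (using $P^{-1}$ and $Q^{-1}$) yields the reverse inequality, giving equality.

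There is no real obstacle here — the argument is entirely parallel to Propositions \ref{res-equiv}, \ref{chain-equiv}, \ref{rad-equiv}. The only minor subtlety is making sure that the correspondence $\delta\leftrightarrow\tilde\delta=Q\delta$ actually establishes minimality of $p$ on both sides, which follows immediately because the correspondence is bijective and preserves the regularity class $C^{p-1}$; in particular, if some $p'<p$ sufficed for $(E,A)$, then $p'$ would also suffice for $(\tilde E,\tilde A)$, contradicting minimality there. The case $p=0$ with the integral estimate \eqref{p=0} is handled in exactly the same way, since $\int_0^T\|\tilde\delta(t)\|_{\tilde\Z}{\rm d}t\le\|Q\|\int_0^T\|\delta(t)\|_\Z{\rm d}t$.
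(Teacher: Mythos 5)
Your proposal is correct and follows essentially the same route as the paper: the paper's proof likewise observes that $x$ solves the perturbed equation for $\delta$ if and only if $\tilde x=Px$ solves the tilded perturbed equation for the correspondingly transformed perturbation, and concludes from there (the paper states this correspondence with $Q^{-1}\delta$, an apparent typo given Definition~\ref{def:equiv}, whereas your $\tilde\delta=Q\delta$ is the consistent choice). You simply spell out the estimate-transfer via $\|P\|,\|P^{-1}\|,\|Q\|$ that the paper leaves implicit.
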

\begin{proof}
Let $P\colon\X\to\tilde\X$, $Q\colon\Z\to\tilde\Z$,
be bounded isomorphisms as in Definition~\ref{def:equiv}. Then the result follows, since $x$ is a~solution of \eqref{eqn:dae.pert}, if, and only if, $\tilde x=Px$ is a~solution of $\frac{{\rm d}}{{\rm d}t}\tilde E\tilde x=\tilde A\tilde x+\tilde\delta$ for $\tilde\delta=Q^{-1}\delta$.
\end{proof}

    \begin{prop}\label{nilp=pert}
The following holds:
        \begin{enumerate}[(a)]
            \item \label{pert-index-a} If the nilpotency index $p_{\rm nilp}^{(E,A)}$ and the perturbation index $p_{\rm pert}^{(E,A)}$ exist, then $p_{\rm nilp}^{(E,A)} \leq p_{\rm pert}^{(E,A)}$.
            \item \label{pert-index-b} If the nilpotency index $p_{\rm nilp}^{(E,A)}$ exists and $A_1$ in \eqref{eqn:form-inf} generates a $C_0$-semigroup, then the perturbation index also exists and $p_{\rm nilp}^{(E,A)}=p_{\rm pert}^{(E,A)}$.
        \end{enumerate}
    \end{prop}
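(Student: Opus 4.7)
The plan is to apply Proposition~\ref{equ=pert} to reduce to the case where $(E,A)$ is already in Weierstra\ss{} form~\eqref{eqn:form-inf}; with respect to $\X = \Y^1 \times \Y^2$ and $\delta = (\delta_1,\delta_2)^T$ the perturbed DAE~\eqref{eqn:dae.pert} decouples into
\[
\tfrac{{\rm d}}{{\rm d}t} x_1 = A_1 x_1 + \delta_1, \qquad \tfrac{{\rm d}}{{\rm d}t}(Nx_2) = x_2 + \delta_2,
\]
where $\nu := p_{\rm nilp}^{(E,A)}$ satisfies $N^\nu=0$ and (if $\nu\ge 1$) $N^{\nu-1}\neq 0$.

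The central identity I would establish is the explicit formula
\[
x_2(t) = -\sum_{i=0}^{\nu-1} N^i\,\delta_2^{(i)}(t), \qquad t\in[0,T],
\]
valid for every classical solution with $\delta_2\in C^{\nu-1}([0,T],\Y^2)$. Existence of this representation is a~telescoping verification using $N^\nu=0$. For uniqueness, set $w := x_2 - \tilde x_2$; since $Ax = \tfrac{{\rm d}}{{\rm d}t}(Ex) - \delta$ is continuous for classical solutions of~\eqref{eqn:dae.pert}, $w$ is continuous and satisfies $\tfrac{{\rm d}}{{\rm d}t}(Nw) = w$. Iterating the integrated identity $(Nw)(t) = (Nw)(0) + \int_0^t w(s)\,ds$ by repeatedly applying~$N$ yields, for every $k\ge 1$,
\[
N^k w(t) = \sum_{j=0}^{k-1}\tfrac{t^j}{j!}N^{k-j}w(0) + \tfrac{1}{(k-1)!}\int_0^t (t-s)^{k-1} w(s)\,ds.
\]
Setting $k=\nu$ annihilates the left-hand side, and differentiating $\nu$ times in~$t$ (using Cauchy's repeated-integration formula on the convolution term) forces $w\equiv 0$.

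For Part~(a), I would suppose $p := p_{\rm pert}^{(E,A)}$ exists with $p<\nu$. Pick $y\in\Y^2$ with $N^{\nu-1}y\neq 0$ and a~cutoff $\eta\in C^\infty([0,T])$ supported in $[T/3,T]$ and equal to~$1$ on $[2T/3,T]$, and test with $\delta^{(n)}(t) = (0,\varphi_n(t)y)^T$ where $\varphi_n(t) = n^{-(p-1)}\eta(t)\sin(nt)$ (dropping the factor $n^{-(p-1)}$ if $p = 0$, and invoking~\eqref{p=0} instead). Since all derivatives of $\eta$ vanish at~$0$, the explicit formula gives $x^{(n)}(0) = 0$; on $[2T/3,T]$ one has $\varphi_n^{(i)}(t) = n^{i-(p-1)}\sin(nt+i\pi/2)$, so $\|\varphi_n^{(i)}\|_\infty \le 1$ for $i \le p-1$ while $\|\varphi_n^{(\nu-1)}\|_\infty = n^{\nu-p}$. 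The explicit formula then yields $\max_t\|x_2^{(n)}(t)\| \ge \|N^{\nu-1}y\|\,n^{\nu-p} - O(n^{\nu-p-1})\to\infty$, contradicting the perturbation bound; hence $p \ge \nu$.

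For Part~(b), the additional hypothesis that $A_1$ generates a~$C_0$-semigroup $T_1(\cdot)$ with constants $M$, $\omega$ yields, via the mild-solution formula,
\[
\max_{0\le t\le T}\|x_1(t)\| \le M{\rm e}^{\omega T}\bigl(\|x_1(0)\| + T\max_{0\le t\le T}\|\delta_1(t)\|\bigr),
\]
while the explicit formula for $x_2$ gives $\max_t\|x_2(t)\| \le \bigl(\sum_{i=0}^{\nu-1}\|N^i\|\bigr)\sum_{i=0}^{\nu-1}\max_t\|\delta_2^{(i)}(t)\|$. These combine to the perturbation estimate with $p = \nu$ (when $\nu = 0$ the algebraic block is absent and~\eqref{p=0} applies to $x_1$), so $p_{\rm pert}^{(E,A)} \le \nu$; together with Part~(a), equality follows. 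The main obstacle I foresee is the uniqueness step underlying the explicit $x_2$-formula: classical solutions only require $Ex$ to be $C^1$, so continuity of $x_2$ itself must first be extracted from~\eqref{eqn:dae.pert}, and the integral iteration must be justified using boundedness of~$N$ together with Fubini.
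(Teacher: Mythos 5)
Your argument is essentially the paper's: reduce to Weierstra\ss{} form via Proposition~\ref{equ=pert}, derive the explicit representation $x_2=-\sum_{i=0}^{\nu-1}N^i\delta_2^{(i)}$, obtain the lower bound on $p_{\rm pert}^{(E,A)}$ from an oscillatory family of perturbations with vanishing initial data, and the upper bound from the mild-solution estimate for the $A_1$-block. Two remarks. First, your uniqueness/regularity discussion for the algebraic block (extracting continuity of $x_2$ from $x_2=\tfrac{{\rm d}}{{\rm d}t}(Nx_2)-\delta_2$ and iterating the integrated identity) is actually more careful than the paper, which formally differentiates $x_2$ repeatedly; this is a genuine improvement, not a detour. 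Second, there is one edge case where your construction in Part~(a) does not close: when $p_{\rm pert}^{(E,A)}=0$ and $\nu=1$ one has $x_2=-\delta_2$, and for $\delta_2(t)=\eta(t)\sin(nt)\,y$ both $\max_t\|x_2(t)\|$ and $\int_0^T\|\delta_2(t)\|\,{\rm d}t$ remain of order one as $n\to\infty$, so no contradiction with \eqref{p=0} arises. Here an oscillatory family is the wrong tool; you need a concentrating one (e.g.\ a bump of unit height and width $1/n$, so that the $L^1$-norm tends to zero while $\sup_t\|x_2\|$ does not) --- this is exactly the case the paper isolates separately as $p_{\rm nilp}^{(E,A)}=1$. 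For $\nu\ge2$ your family works even against \eqref{p=0}, since the top derivative still produces growth $n^{\nu-1}$ while the $L^1$-norm of $\delta_2$ stays bounded.
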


    \begin{proof}
        By Proposition~\ref{equ=pert}, it is no loss of generality to assume that the system is in Weierstra\ss\ form, that is,
            \[(E,A)= \left(\begin{bmatrix}I_{\Y^1} & 0\\0 & N\end{bmatrix},\begin{bmatrix}A_1 & 0\\0 & I_{\Y^2}\end{bmatrix}\right) \]
        and $p\coloneqq p_{\rm nilp}^{(E,A)}$.

        First, we show Part (\ref{pert-index-a}). If $p_{\rm nilp}^{(E,A)}=0$, then nothing has to be shown. If $p_{\rm nilp}^{(E,A)}=1$, then $N=0$ and thus, for the second part of a solution $x$ of \eqref{eqn:dae.pert} holds $x_2(t) =-\delta_2(t)$. Consequently, the maximum of $x$ cannot be estimated by the integral of $\|\delta_2\|$, and thus also an estimate of the form \eqref{p=0} is not possible. Thus, we have $p_{\rm pert}^{(E,A)}\le 1$ in this case.\\
        Finally, we assume that $p\coloneqq p_{\rm nilp}^{(E,A)}\ge 2$. Seeking for a~contradiction, we assume that $p_{\rm pert}^{(E,A)}<p$. Then there exists some $c>0$, such that for all solutions $x$ of \eqref{eqn:dae.pert},
        it holds
        \begin{equation}
            \max_{0\le t\le T} \|x(t)\|_{\X} \le c\left( \|x(0)\|_{\X} + \sum_{i=0}^{p-2}
            \max_{0\le t\le T}  \left\|\frac{\text{d}^{i}}{\text{d}t^{i}}\delta(t)\right\|_{\Z}
                                \right)
            \label{eq:contrad_est}
        \end{equation}
        Let $v_2\in\Y_2$ with $N^{p-1}v_2\neq0$.
        We choose $\delta_1=0$ $x_1(0)=0$ and $\delta_{2,n}\colon [0,T]\to\Y_2$ with
        \[\delta_{2,n}(t)=\frac{{\rm Re}\big(\imath^p{\rm e}^{\imath nt}\big)}{ n^{p-1}}v_2.\]
Then, by using that ${\rm Re}\big(\imath^p{\rm e}^{\imath nt}\big)$ is a~trigonometric function, we have
        \begin{equation}\label{eq:pertest1}
            \forall\,n\ge \frac{\pi}{T}:\quad\max_{0\leq t\leq T}\|N^{p-1}\delta_{2,n}^{(p-1)}\|\geq \|N^{p-1}v_2\|>0,
        \end{equation}
        and, further
        \begin{align}\label{eq:pertest2}
            &\forall\,k=1,\ldots,p-2:\quad\lim_{n\to\infty}\max_{0\leq t\leq T}\|\delta_{2,n}^{(k)}\|=0,\\
            &\delta_{2,n}(0)=
            {\rm Re}\left.\big(\imath{\rm e}^{\imath nt}\big)\right|_{t=0}v_2=0.\label{eq:pertest3}
        \end{align}
        Let $\delta_n=\left(\begin{smallmatrix}
            0\\\delta_{2,n}
        \end{smallmatrix}\right)$.
        Since  if  $x_{1,n}=0$ the solution $x_n=\left(\begin{smallmatrix}
            x_{1,n}\\x_{2,n}
        \end{smallmatrix}\right)$ of $\frac{\text{d}}{\text{d}t} Ex_n(t)=Ax_n(t)+\delta_n(t)$ has $x_{1,n}(0)=0$   and also
            \[x_{2,n}=- \sum_{i=0}^{p-1} N^i \frac{\text{d}^i}{\text{d}t^i} \delta_{2,n}(t),\]
        we obtain from \eqref{eq:pertest1} and \eqref{eq:pertest2}  that
            \[\liminf_{n\to\infty}\max_{0\leq t\leq T}\|x_n(t)\|>0.\]
        On the other hand, by \eqref{eq:pertest1} and \eqref{eq:pertest3}, we obtain
            \[\lim_{n\to\infty}\underbrace{\|x_n(0)\|_{\X}}_{=0} + \sum_{i=0}^{p-2}
            \max_{0\le t\le T}  \left\|\frac{\text{d}^{i}}{\text{d}t^{i}}\delta_n(t)\right\|_{\Z}=0,\]
        which contradicts \eqref{eq:contrad_est}.

        Next, we will show Part (\ref{pert-index-b}).
        Let $x$ be a solution of DAE \eqref{eqn:dae.pert}, and partition
            \[x(t)=\begin{pmatrix}x_{1}(t)\\x_{2}(t)\end{pmatrix},\quad \delta(t)=\begin{pmatrix}\delta_1(t)\\\delta_2(t)\end{pmatrix}\]
        according to the  structure of the Weierstra\ss\ form.
        Then
        \begin{align}
            \label{eqn:dae.part1}
            \frac{\text{d}}{\text{d}t}  x_{1}(t)) &= A_1 x_{1}(t) +\delta_1(t), && t\geq 0,\\
            \label{eqn:dae.part2}
            N\frac{\text{d}}{\text{d}t} x_{2}(t) &= x_{2}(t) +\delta_2(t), && t\geq 0.
        \end{align}
        Let $S$ be the semigroup generated by  $A_1$.
        Then \eqref{eqn:dae.part1} yields
        \begin{align*}
           x_{1}(t) &=
            S(t) x_{1}(0)
            + \int_0^t
            S(t-\tau)\delta_1(\tau)\,\text{d}\tau.
        \end{align*}
        Since $S$ is bounded on $[0,T]$, we have $c_0\coloneqq \sup_{t\in[0,T]}\|S(t)\|<\infty$, $c_1=\max\{c_0,c_0T\}$, and we can conclude
        \begin{align}
            \label{eqn:est.part1}
            \max_{t\in[0,T]}
            \|x_{1}(t)\|_{\Y_1} &\le
            c_0\|x_{1}(0)\|_{\Y_1}
            + Tc_0 \max_{0\le t\le T} \|\delta_1(t)\|_{\Y_1}\\
           &\le  c_1\big(\|x_{1}(0)\|_{\Y_1}
            +  \max_{0\le t\le T} \|\delta_1(t)\|_{\Y_1}\big).
        \end{align}
        In the case where $p=0$, we have $\X=\Y_1$, $x=x_{1}$ and $\delta=\delta_{1}$, and we obtain from \eqref{eqn:est.part1} that $p_{\rm pert}^{(E,A)}=0$. If this is not the case, we find from \eqref{eqn:dae.part2} that
        \begin{align*}
            x_{2}(t) &= N \frac{\text{d}}{\text{d}t} x_{2}(t)- \delta_2(t)
            = N \frac{\text{d}}{\text{d}t} \left(N \frac{\text{d}}{\text{d}t} x_{2}(t)- \delta_2(t)\right) -  \delta_2(t)\\
            &= N^2 \frac{\text{d}^2}{\text{d}t^2} x_{2}(t)) - N \frac{\text{d}}{\text{d}t}  \delta_2(t)
            +\delta_2(t)
            = N^2 \frac{\text{d}^2}{\text{d}t^2} \left(N \frac{\text{d}}{\text{d}t} x_{2}(t)+\delta_2(t)\right) - N \frac{\text{d}}{\text{d}t} \delta_2(t)
            - \delta_2(t)\\
            &= N^3 \frac{\text{d}^3}{\text{d}t^3} x_{2}(t) - N^2 \frac{\text{d}^2}{\text{d}t^2} \delta_2(t) - N \frac{\text{d}}{\text{d}t}  \delta_2(t)
            +\delta_2(t)
            = \hdots
            = - \sum_{i=0}^{p-1} N^i \frac{\text{d}^i}{\text{d}t^i} \delta_2(t).
        \end{align*}
       Boundedness of $E$ yields that $N$ is bounded, and thus $c_2\coloneqq\max_{i=0}^{p-1} \|N^i\|<\infty$. Consequently,
        \begin{align}
            \label{eqn:est.part2}
            \max_{0\le t\le T} \|x_{*2}(t)\|_{ \X_2}
            \le c_2\left(  \sum_{i=0}^{p-1}
            \max_{0\le t\le T}  \left\|\frac{\text{d}^{i}}{\text{d}t^{i}} \delta_2(t)\right\|_{\tilde\Z_2}
            \right).
        \end{align}
        Collecting the bounds \eqref{eqn:est.part1} and \eqref{eqn:est.part2} implies that,
        setting $c\coloneqq\max\{c_1,Tc_1,c_2\},$
        \begin{align*}
            \max_{0\le t\le T} \|x(t)\|_{\X}
            \le c\left( \| x(0)\|_{\X}
            + \sum_{i=0}^{p-1}
            \max_{0\le t\le T}  \left\|\frac{\text{d}^{i}}{\text{d}t^{i}} \delta(t)\right\|_{\Z}
            \right) \, .
        \end{align*}
         Thus,  $p_{\rm pert}^{(E,A)}\leq p_{\rm nilp}^{(E,A)}$.
     \end{proof}
\begin{remark}
In \cite{Rei07}, a class of linear infinite-dimensional differential-algebraic systems was examined, which do not necessarily have a nilpotency index. It was revealed that a~perturbation analysis necessitates the consideration of stronger norms for the initial value. These norms may correspond to a type of spatial perturbation index for systems governed by partial differential-algebraic equations.
\end{remark}

 \section{Summary}
 The purpose of this paper was, as explained in the introduction, to review the common indices for DAEs and to extend them to infinite-dimensional DAEs where this has not already been done. Rigorous definitions were given for the nilpotency, resolvent, radiality, chain, differentiation and perturbation indices.  The definitions of the differentiation and perturbation indices are as far as we know, new in the Banach space setting. Unlike finite-dimensional DAEs, the existence of an index is not guaranteed, and furthermore the various indices  are not equivalent for general DAEs. Each index reveals different information about the system. We  were able to show  that in some cases existence of a particular index will imply existence of some other other indices. One implication is that the chain index is the least restrictive index since existence of any of the  other indices implies existence of the chain index, as well as a bound on the chain index.

 This work has revealed a number of open questions for DAEs on Banach spaces. Under what conditions does a particular index exist? If two indices exist, when are they equal? Which indices are more useful for the study of DAEs? It is hoped that this paper will inspire research on these and related questions.


\begin{thebibliography}{10}

\bibitem{ArnalSiegl2022}
A.~Arnal and P.~Siegl.
\newblock Generalised {A}iry operators.
\newblock {\em preprint, arXiv:2208.14389}, 2022.

\bibitem{BergerIlchmannTrenn2012}
T.~Berger, A.~Ilchmann, and S.~Trenn.
\newblock The quasi-{Weierstrass} form for regular matrix pencils.
\newblock {\em Linear Algebra Appl.}, 436(10):4052--4069, 2012.

\bibitem{Byrnes2006}
C.~I. Byrnes, D.~S. Gilliam, A.~Isidori, and V.~I. Shubov.
\newblock Zero dynamics modeling and boundary feedback design for parabolic
  systems.
\newblock {\em Math. Comput. Modelling}, 44(9-10):857--869, 2006.

\bibitem{GernandHallerReis}
H.~Gernandt, F.~E. Haller, and T.~Reis.
\newblock A linear relation approach to port-{Hamiltonian}
  differential-algebraic equations.
\newblock {\em SIAM J. Matrix Anal. Appl.}, 42(2):1011--1044, 2021.

\bibitem{gernandt_pseudo-resolvent_2023}
H.~Gernandt and T.~Reis.
\newblock A pseudo-resolvent approach to abstract differential-algebraic
  equations, Dec. 2023.
\newblock arXiv:2312.02303 [math].

\bibitem{goldberg}
J.~L. Goldberg.
\newblock Functions with positive real part in a half-plane.
\newblock {\em Duke Mathematical Journal}, 29(2):333 -- 339, 1962.

\bibitem{Helffer}
B.~Helffer.
\newblock {\em Spectral theory and its applications}, volume 139 of {\em Camb.
  Stud. Adv. Math.}
\newblock Cambridge: Cambridge University Press, 2013.

\bibitem{jacob-morris}
B.~Jacob and K.~Morris.
\newblock On solvability of dissipative partial differential-algebraic
  equations.
\newblock {\em IEEE Control Syst. Lett.}, 6:3188--3193, 2022.

\bibitem{jacob-zwart}
B.~Jacob and H.~Zwart.
\newblock {\em Linear port-{Hamiltonian} systems on infinite-dimensional
  spaces}, volume 223 of {\em Oper. Theory: Adv. Appl.}
\newblock Basel: Birkh{\"a}user, 2012.

\bibitem{Kunkelmehrmann}
P.~Kunkel and V.~Mehrmann.
\newblock {\em Differential-algebraic equations. {Analysis} and numerical
  solution}.
\newblock Z{\"u}rich: European Mathematical Society Publishing House, 2006.

\bibitem{LamourMaerzTischendorf2013}
R.~Lamour, R.~M{\"a}rz, and C.~Tischendorf.
\newblock {\em Differential-algebraic equations: a projector based analysis}.
\newblock Differential-Algebraic Equations Forum. Springer Science \& Business
  Media, 2013.

\bibitem{mehrmann2023abstract}
V.~Mehrmann and H.~Zwart.
\newblock Abstract dissipative {H}amiltonian differential-algebraic equations
  are everywhere, 2023.

\bibitem{Morris_rebarber_2007}
K.~Morris and R.~Rebarber.
\newblock Feedback invariance of {SISO} infinite-dimensional systems.
\newblock {\em Math. Control Signals Systems}, 19(4):313--335, 2007.

\bibitem{MO2013}
K.~Morris and A.~Özer.
\newblock Modeling and stabilizability of voltage-actuated piezoelectric beams
  with magnetic effects.
\newblock {\em SIAM J. Control Optim.}, 52(4):2371--2398, 2014.

\bibitem{Rei07}
T.~Reis.
\newblock Consistent initialization and perturbation analysis for abstract
  differential-algebraic equations.
\newblock {\em Math. Control Signals Systems}, 19(3):255--281, 2007.

\bibitem{ReisSelig}
T.~Reis and T.~Selig.
\newblock Zero dynamics and root locus for a boundary controlled heat equation.
\newblock {\em Math. Control Signals Systems}, 27(3):347--373, 2015.

\bibitem{ReiTis05}
T.~Reis and C.~Tischendorf.
\newblock Frequency domain methods and decoupling of linear infinite
  dimensional differential algebraic systems.
\newblock {\em J. Evol. Equ.}, 5(3):357--385, 2005.

\bibitem{fedorov-sviridyuk}
G.~A. Sviridyuk and V.~E. Fedorov.
\newblock {\em Linear {Sobolev} type equations and degenerate semigroups of
  operators}.
\newblock Inverse Ill-Posed Probl. Ser. Utrecht: VSP, 2003.

\bibitem{STrostorff}
S.~Trostorff.
\newblock Semigroups associated with differential-algebraic equations.
\newblock In {\em Semigroups of Operators--Theory and Applications: SOTA,
  Kazimierz Dolny, Poland, September/October 2018}, pages 79--94. Springer,
  2020.

\bibitem{Wong1974}
K.-T. Wong.
\newblock The eigenvalue problem {{\(\lambda Tx+Sx\)}}.
\newblock {\em J. Differential Equations}, 16:270--280, 1974.

\bibitem{Zwart-book}
H.~J. Zwart.
\newblock {\em Geometric Theory for Infinite Dimensional Systems}.
\newblock Springer-Verlag, 1989.

\end{thebibliography}
\end{document}